\DeclareMathOperator{\frat}{Frat} 
\DeclareMathOperator{\aut}{Aut} 
\DeclareMathOperator{\soc}{soc}
\DeclareMathOperator{\diam}{diam}
\DeclareMathOperator{\kndo}{End}
\DeclareMathOperator{\syl}{Syl}
\DeclareMathOperator{\B}{B}
\DeclareMathOperator{\I}{I}
\newcommand{\sym}{\mathrm{Sym}}
\newcommand{\supp}{\mathrm{supp}}
\newcommand{\Aut}{\mathrm{Aut}}
\newcommand{\psl}{\mathrm{PSL}}
\newcommand{\gen}[1]{\left\langle#1\right\rangle}
\newcommand{\ifa}{if and only if }
\DeclareMathOperator{\ssl}{SL}
\newtheorem{thm}{Theorem}
\newtheorem{cor}[thm]{Corollary}
 \newtheorem{lemma}[thm]{Lemma}
\newtheorem{prop}[thm]{Proposition} 
 \newtheorem{defn}[thm]{Definition}
\numberwithin{equation}{section}
\renewcommand{\footnote}{\endnote}
\newcommand{\ignore}[1]{}\makeglossary
\begin{document}
	\bibliographystyle{amsplain}
%	\subjclass{ 20D10, 05C25}
%	\keywords{groups generation; waiting time; Sylow subgroups; permutations groups}
\title[On the connectivity of the non-generating graph]{On the connectivity of the non-generating graph}

\author{Andrea Lucchini}
\address{Andrea Lucchini\\ Universit\`a di Padova\\  Dipartimento di Matematica \lq\lq Tullio Levi-Civita\rq\rq\\ Via Trieste 63, 35121 Padova, Italy\\email: lucchini@math.unipd.it}
\author{Daniele Nemmi}
\address{Daniele Nemmi\\ Universit\`a di Padova\\  Dipartimento di Matematica \lq\lq Tullio Levi-Civita\rq\rq\\ Via Trieste 63, 35121 Padova, Italy\\email: dnemmi@math.unipd.it}

%	\thanks{Partially supported by Universit\`a di Padova (Progetto di Ricerca di Ateneo: \lq\lq Invariable generation of groups\rq\rq).}

\begin{abstract} Given a 2-generated finite  group $G$,  the non-generating graph   of $G$ has as  vertices the  elements of $G$ and two vertices  are adjacent if and only if they are distinct and do not generate $G$. We consider the graph $\Sigma(G)$ obtained from the non-generating graph of $G$ by deleting the universal vertices. We prove that if the derived subgroup of $G$ is not nilpotent, then this graph is connected, with diameter at most 5. Moreover we give a complete classification of the finite groups $G$ such that $\Sigma(G)$ is disconnected.	\end{abstract}

\maketitle

\hbox{}
%\thispagestyle{empty}
%\newpage

\bibliographystyle{alpha}

\section{Introduction}

Let $G$ be a finite group. The generating graph for $G,$ written $\Gamma(G),$ is the graph in which the vertices are the elements of $G$ and there is an edge between $g_1$ and $g_2$ if $G$ is generated by $g_1$ and $g_2.$ If $G$ is not 2-generated, then there will be no edge in this graph. Thus, it is natural to assume that $G$ is 2-generated.
Quite a lot is known about this graph when $G$ is a non-abelian simple group; for example Guralnick and Kantor \cite{guka} showed that there is no
isolated vertex in $\Gamma(G)$ but the identity, and Breuer, Guralnick, Kantor
\cite{BGK} showed that the diameter of the subgraph of $\Gamma(G)$ induced by non-identity elements is  $2$
for all $G$.
If $G$ is an arbitrary finite groups, then $\Gamma(G)$ could contain many isolated vertices.
Let $\Delta(G)$ be the subgraph of $\Gamma(G)$ that is induced by all  the vertices that are not isolated.
In \cite{cl} and \cite{diam} it is  proved that if $G$ is a 2-generated soluble group, then $\Delta(G)$ is connected and $\diam(\Delta(G))\leq 3.$ 
The situation is  different if the solubility assumption is dropped. It is an open problem whether or not  $\Delta(G)$ is connected, but even when $\Delta(G)$ is connected, its diameter
can be arbitrarily large. For example if $G$ is the largest 2-generated direct power of $\ssl(2,2^p)$ and $p$ is a sufficiently large odd prime, then $\Delta(G)$ is connected but $\diam(\Delta(G))\geq 2^{p-2}-1$
(see \cite[Theorem 5.4]{CL2}).
	
\

The aim of this paper is to investigate the connectivity of the
complement graph, denoted by $\Sigma(G),$ of $\Delta(G).$ This graph can be described as follows: we take the  non-generating graph of $G,$ i.e. the graph whose vertices are the elements of $G$ and where there is an edge between $g_1$ and $g_2$ if  $\langle g_1, g_2\rangle\neq G$ and we remove the universal vertices (corresponding to the isolated vertices of the generating graph).
We prove that $\Sigma(G)$ is connected, except for some families that can be completely described. In any case, if $\Sigma(G)$ is disconnected, then $G$ is soluble and its derived subgroup is nilpotent.

\begin{thm}\label{main}
Let $G$ be a 2-generated finite group. Then $\Sigma(G)$ is connected if and only if none  of the following occurs:
\begin{enumerate}
	\item $G$ is cyclic;
	\item $G$ is a $p$-group;
	\item  $G/\frat(G)\cong (V_1\times \dots \times V_t)\rtimes H,$
	where $H\cong C_p$ for some prime $p,$ and $V_1,\dots,V_t$
	are pairwise non $H$-isomorphic non-trivial irreducible $H$-modules.
		\item  $G/\frat(G)\cong (V_1\times \dots \times V_t)\rtimes H,$
		where $H\cong C_p\times C_p$ for some prime $p,$  $V_1,\dots,V_t$
		are pairwise non $H$-isomorphic
		non-trivial irreducible $H$-modules and $C_H(V_1\times \dots \times V_t)\cong C_p.$
	\end{enumerate}
	Moreover if $\Sigma(G)$ is connected, then $\diam(\Sigma(G))\leq 5$, and
	$\diam(\Sigma(G))\leq 3$ under the additional assumption that $G$ is soluble.
\end{thm}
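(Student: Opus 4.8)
The plan is to reduce everything to the case $\frat(G)=1$ and then organise the analysis around the single dichotomy ``$G'$ nilpotent versus not'', treating the four disconnected families as the boundary cases.

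For the reduction, recall that $\langle x,y\rangle=G$ if and only if $\langle x\,\frat(G),\,y\,\frat(G)\rangle=G/\frat(G)$, since the elements of $\frat(G)$ are non-generators. Hence adjacency in the non-generating graph depends only on the cosets modulo $\frat(G)$: two vertices in the same non-identity coset are adjacent and have the same neighbourhood, so (for non-cyclic $G$, where the identity coset carries no vertex) $\Sigma(G)$ is a blow-up of $\Sigma(G/\frat(G))$ in which each vertex is replaced by a clique of size $|\frat(G)|$. Such a blow-up has the same number of connected components and the same diameter, which is exactly why the classification is phrased in terms of $G/\frat(G)$. From now on I would assume $\frat(G)=1$; the degenerate families are then immediate, the cyclic case directly and the $p$-group case because $G/\frat(G)$ is elementary abelian of rank $\le 2$, so $\Sigma$ is a disjoint union of cliques indexed by the lines of $C_p\times C_p$. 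The basic routing principle for the remaining cases is this: if $x\ne y$ are vertices and $\langle x,y\rangle\ne G$ they are already adjacent; otherwise I look for a common non-generating partner, and if some proper subgroup $K$ contains $x,y$ and a third vertex $z$ then $x-z-y$ is a path, so $d_\Sigma(x,y)\le 2$. Thus the whole problem is reduced to understanding the intersection pattern of the maximal subgroups of $G$ and which of them contain vertices of $\Sigma(G)$.

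For the positive direction when $G'$ is not nilpotent, I would produce a proper ``hub'' subgroup $L$ with two properties: every vertex of $\Sigma(G)$ has a non-generating partner inside $L$, and the vertices contained in $L$ induce a subgraph of small diameter. Non-nilpotency of $G'$ guarantees a non-abelian, or at least non-central, chief factor, and the associated normal structure supplies such an $L$ together with an abundance of overlapping maximal subgroups. Routing $x \to \pi(x)\in L \to \pi(y)\in L \to y$ through partners $\pi(x),\pi(y)$ costs two steps plus the diameter of the $L$-part: in general this gives $\diam(\Sigma(G))\le 5$, and when $G$ is soluble the crown structure makes the $L$-part a clique, collapsing the estimate to $\diam(\Sigma(G))\le 3$.

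The delicate part, and the main obstacle, is the metanilpotent regime in which $G'$ is nilpotent. Writing $G/\frat(G)\cong(V_1\times\dots\times V_t)\rtimes H$ with the $V_i$ irreducible $H$-modules, I would analyse connectivity through crown-based powers and the complements of $\soc(G)$, aiming for a clean equivalence. Disconnection should be traced exactly to the rigidity that appears when the $V_i$ are pairwise non-$H$-isomorphic and $H$ is either $C_p$ or a $C_p\times C_p$ whose action on $V_1\times\dots\times V_t$ has kernel $C_p$: in these four configurations the vertices split according to the generator of $H$ (equivalently, the complement) they induce, no maximal subgroup bridges the resulting sectors, and one checks there are at least two components by describing them explicitly. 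Conversely, any repeated isomorphism type among the $V_i$, or any larger or otherwise-shaped $H$, produces diagonal complements and extra maximal subgroups that meet across the sectors, and I would use these to route between any two vertices within distance $3$. Establishing this ``connected unless listed'' statement sharply — ruling out every non-listed metanilpotent configuration while maintaining the diameter bound — is where the bulk of the technical work lies, and it is the step I expect to be hardest.
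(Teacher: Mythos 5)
Your global architecture (Frattini reduction, the dichotomy on whether $G'$ is nilpotent, hub-plus-routing for the connected cases, explicit identification of the four disconnected families) matches the paper's, and your treatment of the degenerate families and of the metanilpotent regime is on the right track --- though the paper actually settles that last regime rather quickly, via an explicit determinant criterion for when two elements of $(V_1\times\dots\times V_t)\rtimes H$ generate, so you have mislocated where the bulk of the work lies. The genuine gap is in the case you dispatch in one sentence: $G$ insoluble. You assert that non-nilpotency of $G'$ and ``the associated normal structure'' supply a proper hub subgroup $L$ such that every vertex has a non-generating partner in $L$. After quotienting to a monolithic group with $\soc(G)\cong S^n$ non-abelian, producing such a hub is the hardest part of the paper: one needs a subgroup $H\le\Aut S$ with $HS=\Aut S$, $H\cap S<S$, and a prime $r$ such that every $h\in H$ can be multiplied by an element of $H\cap S$ so as to change $|h|_r$ (this rests on a theorem of Aschbacher--Kleidman--Liebeck and a case analysis over the classification), plus the spread theorem of Burness--Guralnick--Harper, plus a line-by-line reworking of the Lucchini--Menegazzo generation argument to show that a generating pair can be modified inside $\soc(G)$ so that one member lands in the hub $\tilde H$, possibly after first replacing one generator by a product $g_1g_2^i$ whose permutation part has a fixed point. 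None of this follows from ``an abundance of overlapping maximal subgroups'', and even then every vertex is only shown to be within distance $2$ of the hub, which is exactly what produces the bound $5$.

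Worse, when $G/\soc(G)$ is cyclic --- in particular when $G$ is a non-abelian simple group --- the hub strategy is not what the paper uses at all: there every non-identity element is a vertex, $\Sigma(G)$ is dual to the intersection graph $\I(G)$, and the paper invokes Shen's bound $\diam(\I(G))\le 4$ for insoluble non-simple $G$ and Freedman's theorem that $\diam(\I(G))\le 5$ for simple $G$. Without a substitute for these inputs your plan establishes neither connectivity nor $\diam(\Sigma(G))\le 5$ for simple groups. A secondary omission: outside the Frattini quotient (where your blow-up observation applies and lifting is automatic), routing through a primitive or monolithic quotient requires Gasch\"utz's lemma to lift generating pairs, hence vertices and paths, from $G/N$ back to $G$; you never address this, yet it is the mechanism by which the paper converts its quotient computations into statements about $\Sigma(G)$ itself.
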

We don't know whether the bound $\Sigma(G)$ is the best possible. In any case if $\B$ is the Baby Monster, then $\diam(\Sigma(G))\geq 4$ (see the end of Section 	\ref{secunso}). On the other hand for soluble groups, the bound $\diam(\Sigma(G))\leq 3$ is the best possible. Consider for example $G=\langle a \rangle \times \langle b \rangle \times
\langle c \rangle,$ with $|a|=|b|=2$ and $|c|=3.$ Then the shortest path in $\Sigma(G)$ between $ac$ and $bc$ is
$(ac,a,b,bc).$

\

When $\Sigma(G)$ is disconnected, it is possible that it contains some isolated vertices. However this occurs only in few particular cases.
\begin{prop}\label{isola}Let $G$ be a 2-generated finite group. Then
	$\Sigma(G)$ has an isolated vertex \ifa 
	\begin{enumerate}
		\item $G$ is cyclic;
		\item $G\cong C_2\times C_2$;
		\item $G\cong D_p$ is a dihedral group with $2p$ elements, for $p$ an odd prime.
	\end{enumerate}
\end{prop}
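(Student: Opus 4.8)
The plan is to prove the two implications separately, verifying the ``if'' direction by direct inspection and organizing the ``only if'' direction around one structural observation: \emph{in a non-cyclic group, an isolated vertex of $\Sigma(G)$ must be an involution.} For the ``if'' direction I would simply exhibit the isolated vertices. If $G=\langle x\rangle$ is cyclic, every generator $g$ satisfies $\langle g,h\rangle\supseteq\langle g\rangle=G$ for all $h$, so $g$ has no neighbour; hence (1) yields isolated vertices. If $G\cong C_2\times C_2$, the only non-useful element is $1$ while any two distinct involutions generate $G$, so each of the three involutions is isolated. If $G\cong D_p$, then $1$ is again the only universal vertex, a nontrivial rotation generates $G$ with any reflection, and two distinct reflections (or a reflection and a nontrivial rotation) generate $G$; thus every reflection is isolated.

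For the converse, suppose $g$ is isolated in $\Sigma(G)$ and $G$ is not cyclic (otherwise we are in case (1)). I first record that whether an element is \emph{useful}, i.e. lies in a generating pair, depends only on its image in $G/\frat(G)$; consequently the whole coset $g\,\frat(G)$ consists of useful elements, and if $\frat(G)\neq 1$ it contains a useful $h\neq g$ with $\langle g,h\rangle\subseteq\langle g\rangle\frat(G)\neq G$, contradicting that $g$ is isolated. Hence $\frat(G)=1$. Next comes the key point: if $\gcd(k,|g|)=1$ then $\langle g^k\rangle=\langle g\rangle$, so $\langle g^k,w\rangle=\langle g,w\rangle$ for every $w$ and each such $g^k$ is again useful; as all $\varphi(|g|)$ generators of $\langle g\rangle$ lie in a common maximal subgroup $M\supseteq\langle g\rangle$ (which exists since $G$ is non-cyclic), the isolation of $g$ forces $\varphi(|g|)=1$, i.e. $g$ is an involution.

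Since $g$ is useful there is a useful $w\neq g$ with $\langle g,w\rangle=G$, so $\Sigma(G)$ has at least two vertices and is therefore disconnected; Theorem~\ref{main} then applies and $G$ is a $p$-group or a group of type (3) or (4), all now with $\frat(G)=1$. A $p$-group gives $G\cong C_p\times C_p$, and the involution condition forces $p=2$, i.e. case (2). For types (3) and (4) I would use that an element is useful precisely when the maximal subgroups through it fail to cover $G$, together with the fact that an isolated involution shares no maximal subgroup with another useful element. Writing $G=(V_1\times\cdots\times V_t)\rtimes H$ in type (3), the useful elements of $V$ are exactly the full-support vectors, of which there are $\prod_i(|V_i|-1)\geq 2$ (each $|V_i|\geq 3$); since they all lie in the maximal subgroup $V$, none is isolated, so $g\notin V$ and $g$ has nontrivial image in $H\cong C_p$, forcing $p=2$. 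An element $vh$ with nontrivial $H$-part lies, for each $i$, in exactly one maximal subgroup $M_i=\widehat V_i\rtimes K_i$ with $\widehat V_i=\prod_{j\neq i}V_j$; for $t\geq 2$ each $M_i$ carries a second useful element, while for $t=1$ the unique maximal subgroup through $g$ is $\langle g\rangle=\{1,g\}$, so $g$ is isolated and $G=V_1\rtimes C_2\cong D_p$, case (3). In type (4) a useful element has nontrivial image in $G/[G,G]\cong C_p\times C_p$, so an isolated involution again forces $p=2$ and $G\cong L\times C_2$ with $L$ of type (3); here I would exhibit a useful neighbour of $g$, namely $gz$ for the central involution $z$ (using a homomorphism $G\to C_2$ nontrivial on $g$ to build an automorphism sending $g$ to $gz$), or, when $g$ projects into the central factor, a second full-support vector, proving no vertex is isolated.

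The main obstacle is the bookkeeping for types (3) and (4): identifying the maximal subgroups through a given element and checking that, outside the dihedral case $t=1$, $p=2$, at least one of them contains a second useful vertex. The technical heart is the verification that the norm operator $1+h+\cdots+h^{a-1}$ is invertible on each nontrivial irreducible $V_i$ (so that each $M_i$ is uniquely pinned down by the element), together with the analysis of how the central $C_p$ in type (4) interacts with usefulness; these are where the genuine work lies, the involution reduction having removed all higher-order cases.
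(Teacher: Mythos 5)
Your argument is correct, but it follows a genuinely different route from the paper's. The paper's proof is short and self-contained: it first observes that an isolated vertex $g$ of a non-cyclic $G$ must equal $g^{-1}$ (since $g^{-1}$ is a vertex and $\langle g,g^{-1}\rangle\neq G$ otherwise), and then exploits the single observation that if $\langle g,h\rangle=G$ then $\langle g,h^i\rangle=G$ for every non-trivial power $h^i$ (because $gh^i$ is a vertex distinct from $g$); applied to $h$ in the abelian case and to $gg^h$ in the non-abelian case, this immediately forces $C_2\times C_2$ or $D_p$ with $p$ prime, with no appeal to the classification. You instead reduce to $\frat(G)=1$ via Gasch\"utz (Proposition~\ref{gaz}) and the non-generator property, prove $g$ is an involution by the closely related remark that all generators of $\langle g\rangle$ are mutually adjacent useful elements, and then invoke Theorem~\ref{main} to restrict to the families (2)--(4), hunting for isolated vertices in each via Lemma~\ref{corone}. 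This is legitimate (Theorem~\ref{main} is proved independently of this proposition, so there is no circularity) and has the side benefit of locating exactly which vertices are isolated in each disconnected family, but it is considerably heavier: you inherit all the machinery behind the classification, and some of what you flag as the ``technical heart'' is actually unnecessary --- you never need the maximal subgroup $M_i$ through $vh$ to be \emph{unique} (so the invertibility of $1+h+\dots+h^{a-1}$ is irrelevant), only that \emph{some} proper subgroup contains $g$ together with a second useful element, which for $t\geq 2$ is immediate by perturbing one coordinate of $v$ and applying Lemma~\ref{corone}. Two small points you should make explicit: in the type (3), $t=1$ case you need that a faithful irreducible $C_2$-module is cyclic of odd prime order (true, since $x^2-1$ splits over any field of odd characteristic and $C_2$ has only the trivial irreducible module in characteristic $2$), which is what pins $G$ down to $D_q$ rather than something larger; and in type (4) the automorphism construction for producing $gz$ is overkill --- $gz$ is useful directly by Lemma~\ref{corone} and $\langle g,gz\rangle=\langle g,z\rangle$ is a proper subgroup because $G/\langle z\rangle$ is non-cyclic.
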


The structure of the paper is as follows. In Section \ref{prsogr}
we study $\Sigma(G)$ in the particular case when $G$ is a primitive soluble group. In Section \ref{secunso} we investigate $\Sigma(G)$ when $G$ is a monolithic group whose socle is non-abelian. Thanks to the fact that if $N$ is a proper normal subgroup of $G$ and $\Sigma(G/N)$ is connected, then $\Sigma(G)$ is also connected
(see Proposition \ref{quoz}), using the results from Sections \ref{prsogr} and \ref{secunso} it can be easily proved that
the statement of Theorem \ref{main} holds if the derived subgroup of $G$ is not nilpotent. The case when the derivated subgroup is nilpotent is analysed in Section \ref{ultimasec}.

\section{Primitive soluble groups}\label{prsogr}

\begin{defn} Let $G$ be a finite group. We denote by $V(G)$ the subset of $G$ consisting of the elements $x$ with the property that $G=\langle x, y\rangle$ for some $y.$
\end{defn}

\begin{lemma}\cite[Proposition 2.2]{nof}\label{lemma10}
	Let $G$ be  a primitive  soluble group. Let $N=\soc(G)$ and $H$ a core-free maximal subgroup of $G$. Given $1\neq  h\in H$ and $n\in N$, $hn \in V(G)$ if and only if $h\in V(H).$ In particular $V(H)\setminus \{1\}\subseteq V(G).$
\end{lemma}

\begin{prop}\label{monosol}
Let $G$ be a 2-generated primitive soluble group. Then $\Sigma(G)$ is disconnected  if and only if either $G\cong C_p$ or $G/\soc(G)\cong C_p$. Moreover if $G$ is connected, then $\diam(\Sigma(G))\leq 3.$
\end{prop}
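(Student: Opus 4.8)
\emph{Set-up.} Since $G$ is a $2$-generated primitive soluble group, the plan is to use the structure $G = N \sd H$, where $N = \soc(G)$ is the unique minimal normal subgroup, $N = C_G(N)$ is a faithful irreducible $H$-module, and $H$ is a core-free maximal subgroup with $H \cong G/\soc(G)$; write $\pi\colon G \to H$ for the projection with kernel $N$. The engine of every step is the remark that for $K \le G$ with $\pi(K) = H$, the intersection $K \cap N$ is an $H$-submodule of $N$, hence trivial or all of $N$, so that $K = G$ if and only if $K \cap N \ne 1$. In particular, if $g_1, g_2 \in V(G)$ satisfy $\gen{\pi(g_1), \pi(g_2)} \ne H$, then $\gen{g_1, g_2} \ne G$, so $g_1$ and $g_2$ are adjacent in $\Sigma(G)$. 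First I would use Lemma~\ref{lemma10} to describe the vertex set: for $h \ne 1$ one has $hn \in V(G)$ iff $h \in V(H)$, while the engine shows that a nontrivial element of $N$ lies in $V(G)$ precisely when $H$ is cyclic.

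\emph{The two disconnected families.} If $G \cong C_p$ then any two distinct elements generate, so $\Sigma(G)$ is edgeless on $p \ge 2$ vertices and is disconnected. If $H \cong G/\soc(G) \cong C_p$, then by the engine $\gen{n, n'} \le N$ is proper for $1 \ne n, n' \in N$, whereas $\gen{n, hn''} = G$ whenever $h \ne 1$ (its image is all of $H$ and it meets $N$ nontrivially). Hence the clique $N \setminus \{1\}$ is joined to no other vertex, and since vertices $hn$ with $h \ne 1$ exist, $\Sigma(G)$ is disconnected. This settles the ``if'' direction.

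\emph{Connectedness when $H$ is non-cyclic.} For the converse assume $G \not\cong C_p$ and $H \not\cong C_p$; then $H \ne 1$ and $H$ is either non-cyclic or cyclic of composite order. If $H$ is non-cyclic then $\gen{h} \ne H$ for every $h \in H$, so the engine makes each fibre $\pi^{-1}(h) \cap V(G)$ a clique, and likewise makes $H \cap V(G)$ a clique (it lies inside the proper subgroup $H$). Since here every vertex has the form $g = hn$ with $h = \pi(g) \in V(H) \setminus \{1\}$, for any two vertices $g, g'$ the path $g \sim \pi(g) \sim \pi(g') \sim g'$ --- two fibre edges around one edge inside $H$ --- has length at most $3$. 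Thus $\Sigma(G)$ is connected with $\diam(\Sigma(G)) \le 3$.

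\emph{Connectedness when $H$ is cyclic of composite order, and the main obstacle.} Write $H = \gen{h_0} \cong C_m$ with $m$ composite; now $V(G) = G \setminus \{1\}$ and $S := N \setminus \{1\}$ is a clique, which I would use as a hub. A vertex $g = hn$ with $\gen{h} \ne H$ is adjacent to all of $S$ by the engine. If instead $\pi(g)$ generates $H$, then $g^m \in N$: either $g^m \ne 1$ and $g \sim g^m \in S$, or $g^m = 1$, so that $\gen{g}$ is a complement and, fixing a prime $r \mid m$, the power $g^r$ projects onto the proper subgroup $\gen{h_0^{\,r}}$, giving $g \sim g^r$ with $g^r$ adjacent to all of $S$. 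A short case check then joins any two vertices through $S$ in at most three steps. The hard part is the single case of two vertices $g, g'$ with $\gen{g}, \gen{g'}$ both complements, since these are adjacent to no socle element; I would connect them by $g \sim g^r \sim g'^{\,r} \sim g'$ for a common prime $r \mid m$, observing that $\pi(g^r)$ and $\pi(g'^{\,r})$ both lie in $\gen{h_0^{\,r}} \ne H$, so $g^r \sim g'^{\,r}$ by the engine. This is exactly where the hypothesis $H \not\cong C_p$ (that is, $m$ composite) is essential, since it provides the nontrivial proper subgroup $\gen{h_0^{\,r}}$; the conclusion is again $\diam(\Sigma(G)) \le 3$.
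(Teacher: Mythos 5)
Your proof is correct. The skeleton coincides with the paper's: the same reduction to $G=N\rtimes H$ with $N=\soc(G)$ a faithful irreducible $H$-module, the same use of Lemma~\ref{lemma10} to identify the vertex set, the same two disconnected families, and in the non-cyclic $H$ case the same path $g\sim\pi(g)\sim\pi(g')\sim g'$ through $H$. Where you genuinely diverge is the hardest case, $H=\gen{h_0}\cong C_m$ with $m$ composite. The paper first uses the fact that $C_N(H)=1$ forces $v_1h_1$ to be conjugate to $h_1$, reduces to an endpoint lying in $H$, and then routes the path through elements $h$ and $h^v$ of two conjugate complements, both projecting into the proper subgroup $\gen{h}<H$; this relies on the conjugacy of complements and on the cyclic subgroup generated by the other endpoint being contained in a conjugate of $H$. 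You instead take $N\setminus\{1\}$ as a hub and use powers of the vertices themselves: a vertex projecting into a proper subgroup of $H$ is adjacent to all of $N\setminus\{1\}$, a vertex $g$ with $g^m\neq 1$ reaches the hub through $g^m$, and two ``complement'' vertices are joined by $g\sim g^r\sim g'^{\,r}\sim g'$ since both $r$-th powers project into $\gen{h_0^{\,r}}\neq H$. This is more self-contained --- it avoids conjugacy of complements entirely --- at the cost of a slightly longer (but complete and verifiable) case analysis; both arguments yield $\diam(\Sigma(G))\leq 3$ and isolate the same role for the compositeness of $|H|$, namely the existence of a non-trivial proper subgroup of $H$.
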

\begin{proof}
	If $G$ is nilpotent, then $G\cong C_p.$ In this case $\Sigma(G)$ is a disconnected graph with $p$ vertices and no edge. So we may assume that $G$ is not nilpotent. Thus $G\cong V \rtimes H,$ where $V=\soc(G)$ is a faithful irreducible $H$-module.  If $|H|=p,$ with $p$ a prime, then $\langle g, v \rangle=G$ for any $1\neq v \in V$ and $g\notin V.$ In particular $V\setminus \{1\}$ is a connected component of $\Sigma(G)$. So we may assume that $|H|$ is not a prime.
		Suppose that $g_1=v_1h_1, g_2=v_2h_2$ are two different elements of $V(G),$ with $v_1, v_2 \in V, h_1, h_2 \in H.$  By Lemma \ref{lemma10}, for $i\in \{1,2\},$ either $h_i=1$ (and therefore $H$ is cyclic) or $h_i \in V(G).$
	 If neither $h_1$ nor $h_2$ is a generator of $H$, then $(v_1h_1,h_1,h_2,v_2h_2)$ is a path in $\Sigma(G)$. So we may assume $H=\langle h_1 \rangle.$ In this case $h_1$ and $g_1$ are conjugated in $G$, so it is not restrictive to assume $v_1=1.$ %If $\langle h_2 \rangle \neq H$, then $(h_1,h_2,v_2h_2)$ is a path in $\Sigma(G).$ 
	% We remain with the case when  $\langle h_1 \rangle =  \langle h_2 \rangle = H.$ 
	Since $|H|$ is not a prime, we can choose  $1\neq h \in H$ with $|h|<|H|.$ Since all the complements of $V$ in $G$ are conjugated, there exists $v\in V$ such that 
	$\langle g_2 \rangle \leq \langle h_1^v\rangle.$
But then $(h_1,h,h^v,g_2)$  is a path in $\Sigma(G).$ 
	\end{proof}

\section{Monolithic groups with non-abelian socle}\label{secunso}
Let $G$ be a 2-generated finite monolithic group, with a non-abelian socle. The aim of this section is to prove that the graph $\Sigma(G)$ is connected, with diameter at most 5.

Assume $A=\soc(G)\cong S^n$, with $S$ a finite non-abelian simple group and $n\in \mathbb N.$
We may identify $G$ with a subgroup of $\Aut(S^n)=\Aut S \wr \sym (n)$, the wreath product of $\Aut
S$ with the symmetric group of degree $n$. So the elements of $G$ are of the
kind $g=(\alpha_1, \dots, \alpha_n)\sigma$, with $\alpha_i \in \Aut S$ and $
\sigma \in \sym(n)$.
For all this section we will refer to this identification and we will denote by $\pi$ the homomorphism $\pi: \Aut S \wr \sym (n) \to \sym (n)$ mapping
$(\alpha_1, \dots, \alpha_n)\sigma$ to $\sigma$.

We begin with two lemmas concerning some properties of $\aut S.$
\begin{lemma}\label{accar}Let $S$ be a finite non-abelian simple group. There exist a subgroup $H$ of $\aut S$ and a prime divisor $r$ of the order of $S$ with the following properties:
	\begin{enumerate}
		\item $H \cap S < S;$
		\item $HS=\aut S;$
		\item for every $h
		\in H$ we can find an element $s \in S\cap H$ such that 
		$|h|_r \neq |h s|_r$.
				\end{enumerate}
\end{lemma}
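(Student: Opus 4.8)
The plan is to realise $H$ as the normaliser in $\aut S$ of a Sylow subgroup of $S$, and to choose the prime $r$ (ultimately via the classification) to divide $|S|$ but \emph{not} $|\mathrm{Out}(S)|$. So fix such a prime $r$, let $K\in\syl_r(S)$ and set $H:=\nor_{\aut S}(K)$. Property (2) is then immediate from a Frattini argument: $\aut S$ permutes the Sylow $r$-subgroups of $S$ and $S$ is already transitive on them, so for every $\alpha\in\aut S$ there is $s\in S$ with $K^{\alpha}=K^{s}$, whence $\alpha s^{-1}\in H$ and $HS=\aut S$. Property (1) follows at once as well: $H\cap S=\nor_S(K)$, and this is proper in $S$ because a non-abelian simple group has no normal Sylow subgroup, so the nontrivial proper subgroup $K$ is not normal.

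The substance is property (3). First I would dispose of the elements $h\in H$ with $|h|_r>1$. Since $H/\nor_S(K)\cong\mathrm{Out}(S)$ has order coprime to $r$, the $r$-part $h_r$ of $h$ lies in $\nor_S(K)$; and as $K\trianglelefteq\nor_S(K)$ is the only Sylow $r$-subgroup of $\nor_S(K)$, we get $h_r\in K$. Taking $s:=h_r^{-1}\in K\subseteq S\cap H$ gives $hs=h_{r'}$, an $r$-regular element, so $|hs|_r=1\neq|h|_r$. Next, for an $r$-regular $h$ (so $|h|_r=1$) that does not act fixed-point-freely on $K$, I would use coprime action: $\mathrm{C}_K(h)\neq 1$ is an $h$-invariant $r$-group, its centre contains an element $z$ of order $r$ commuting with $h$, and then $|hz|=r\,|h|$, so $|hz|_r=r\neq 1=|h|_r$. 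Both of these cases need only $s\in K$.

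The remaining, and genuinely hard, case is that of an $r$-regular $h\in H$ acting fixed-point-freely on $K$, i.e.\ with $\langle h\rangle K$ a Frobenius group with kernel $K$. Here every element of the coset $hK$ is conjugate to a power of $h$ and is therefore $r$-regular, so no choice of $s\in K$ can change the $r$-part, and one is forced to look for $s$ in $\nor_S(K)\setminus K$. This is the main obstacle, and it is where I expect the argument to become non-uniform: the plan is to invoke the classification to select, for each family of simple groups $S$, a prime $r$ (for instance a primitive prime divisor yielding a cyclic Sylow $r$-subgroup, for which $\nor_S(K)$ is explicitly known) such that either no such fixed-point-free $r$-regular automorphism occurs inside $\nor_{\aut S}(K)$, or a suitable $s\in\nor_S(K)\setminus K$ with $r\mid|hs|$ can be exhibited. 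Verifying this last point family by family is the part I expect to carry the real weight of the proof.
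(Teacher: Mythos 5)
Your reductions for properties (1) and (2) are correct, and your first two steps toward (3) are sound: if $r\nmid|\mathrm{Out}(S)|$ then the $r$-part of $h$ lands in $K$, so subtracting it off works, and if $\mathrm{C}_K(h)\neq 1$ for an $r$-regular $h$ then a commuting element of order $r$ works. But the proof stops exactly where the lemma's content begins. For an $r$-regular $h$ with $\langle h\rangle$ acting fixed-point-freely on $K$ you correctly observe that no $s\in K$ can help, and then you only state a \emph{plan} to resolve this case family by family; no family is actually treated, and no criterion is given guaranteeing that a suitable prime $r$ (coprime to $|\mathrm{Out}(S)|$, with the fixed-point-free case controllable inside $\nor_{\aut S}(K)$) exists for every $S$. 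This is a genuine gap, not a routine verification: for instance, for $S=\psl(2,8)$ and the ``primitive prime divisor'' choice $r=7$, the field automorphism of order $3$ normalises a Sylow $7$-subgroup $K$ and acts on it by squaring, while $\nor_S(K)$ is dihedral of order $14$; every product of that automorphism with an element of $\nor_S(K)$ still acts fixed-point-freely on $K$, so property (3) fails outright for that choice of $(H,r)$. So the selection of $r$ cannot be made by a soft coprimality condition and really does require the case analysis you defer.

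It is also worth noting that the paper's own choices do not fit your framework. For alternating and sporadic groups it takes $r=2$ and $H$ a Sylow $2$-subgroup of $\aut S$ (not a Sylow normaliser, and with $r$ typically dividing $|\mathrm{Out}(S)|$, so your first reduction would not apply); for groups of Lie type it takes $r=p$ the defining characteristic and $H=\nor_{\aut S}(P)$ with $P\in\syl_p(S)$, again with $p$ possibly dividing $|\mathrm{Out}(S)|$. In each case the order-changing property (3) is imported from \cite[Theorem 2]{akl} and from the arguments in \cite[Section 2]{uni}, with a separate ad hoc treatment of $\psl(2,q)$, $q$ odd. In other words, the part you label as ``carrying the real weight'' is precisely the part the paper proves (by citation and case analysis) and your proposal omits.
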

\begin{proof}
First suppose that $S$ is an alternating group of degree $n$ (with $n\neq 6$) or a sporadic simple group. We claim that in this case we can tale $r=2$ and $H \in \syl_2(\aut S).$ Indeed $|\aut S:S|\leq 2$ and if $\aut(S)	\neq S$, then there exists an involution $a \in \aut S \setminus S.$ To conclude it suffices to notice that $\aut  S$ contains an element $b$ of order 4 such that $\aut(S)=S\langle b \rangle$ (see for example \cite[Theorem 2]{akl}). Now assume that $S$ is a simple group of Lie type, defined over a field of characteristic $p$. The proof of the Lemma in \cite[Section 2]{uni} implies that, except when $S=\psl(2,q)$ and $q$ is odd, we can take $r=p$ and $H=N_G(P)$ for $P \in \syl_p(S).$ We remain with the case when $S=\psl(2,q)$ and $q$ is odd. In this case let $r=2,$ $P\in \syl_2(G)$ and $H=N_G(P).$
We may choose $P$ so that the Frobenius automorphism $\sigma$ belongs to $H.$
Let $h \in H.$ Up to multiplying with a suitable element of $S\cap H$, we may assume $h=y\sigma_1\sigma_2$, with $y \in H$, $|y|=2$,
$\sigma_1,\sigma_2 \in \langle \sigma \rangle,$ $|\sigma_1|$ a 2-power, $|\sigma_2|$ odd and $[y,\sigma]=1.$ Let $\tilde q$ be the size of the subfield of $GF(q)$ centralized by $\sigma_2.$
By \cite[Theorem 2]{akl}, there exists $t \in \psl(2,\tilde q)\langle h\sigma_1\rangle \cap H$ such that $|ty\sigma_1|_2>|y\sigma_1|_2.$ We have $|th|_2>|h|_2.$
\end{proof}
\begin{lemma}\label{spre}
	Suppose that $h_1, h_2\in H.$ There exist $s \in S,$ $t \in H\cap S$ such that $\langle h_1s, h_2t\rangle = \langle h_1, h_2\rangle S.$
\end{lemma}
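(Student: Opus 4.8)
The plan is to set $K:=\langle h_1,h_2\rangle S$ and to observe that, since $C_{\Aut S}(S)=1$, the subgroup $S$ is the unique minimal normal subgroup of $K$; thus $K$ is monolithic with socle $S$. For every $s\in S$ and $t\in H\cap S$ the images of $h_1s$ and $h_2t$ in $K/S$ agree with those of $h_1$ and $h_2$, which generate $K/S$, so automatically $\langle h_1s,h_2t\rangle S=K$. The whole problem therefore reduces to forcing $S\le\langle h_1s,h_2t\rangle$. Because $S$ is the socle, this is equivalent to requiring that $h_1s$ and $h_2t$ lie together in no maximal subgroup of $K$; and any maximal subgroup $M$ containing both must satisfy $MS=K$ and hence (having trivial core, as $S\not\le M$) be core-free.

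First I would fix the second generator and exploit the full freedom in $s\in S$. For a fixed $b=h_2t$ with $t\in H\cap S$ (so that $b\in H$), the elements $s$ for which $\langle h_1s,b\rangle\neq K$ are exactly those with $h_1s\in M$ for some core-free maximal $M\ni b$; writing $h_1=ms_0$ with $m\in M$, $s_0\in S$ one checks $M\cap h_1S=m(M\cap S)$, of size $|M\cap S|$. Hence a good $s$ exists as soon as
\[
\sum_{M}|M\cap S|<|S|,
\]
the sum running over the core-free maximal subgroups of $K$ that contain $b$. The task thus becomes: choose $t\in H\cap S$ so that $b=h_2t$ lies in few enough, and small enough, core-free maximal subgroups for this inequality to hold.

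This last step is the crux, and it is where the hypotheses on $H$ recorded in Lemma \ref{accar} must be used: the constraint $t\in H\cap S$, with $H\cap S$ a \emph{proper} subgroup of $S$ by part (1), prevents me from moving $b$ freely through the coset $h_2S$. I would invoke property (3) of Lemma \ref{accar} to select $t\in H\cap S$ so that the $r$-part of the order of $h_2t$ is pinned to a value not met by the elements of the relevant core-free maximal subgroups, thereby excluding $b$ from all but a controlled family of them and driving the displayed sum below $|S|$.

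The main obstacle is precisely this confinement. Had $t$ been allowed to range over all of $S$, a Gaschütz-type argument for a nonabelian simple socle would at once produce corrections $s,t\in S$ with $\langle h_1s,h_2t\rangle=K$; the genuine difficulty is to realize the correction of the second generator inside the proper subgroup $H\cap S$. I expect to treat separately the few families for which the $r$-part control is delicate, mirroring the case division in the proof of Lemma \ref{accar} — in particular $S=\psl(2,q)$ with $q$ odd — by appealing to the explicit description of $H$ and of $S\cap H$ given there.
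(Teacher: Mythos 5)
Your reduction to forcing $S\le\langle h_1s,h_2t\rangle$ in the monolithic group $K=\langle h_1,h_2\rangle S$ is correct, but the argument stops exactly at the point where the real work lies: the inequality $\sum_{M}|M\cap S|<|S|$ over the core-free maximal subgroups of $K$ containing $b=h_2t$ is never established, and the mechanism you propose for it --- using Lemma \ref{accar}(3) to ``pin the $r$-part of the order of $h_2t$'' so as to exclude $b$ from most maximal subgroups --- is not something that property (3) delivers. Controlling the maximal overgroups of an element of $\Aut S$ lying outside $S$ via fixed-point-ratio/counting estimates is precisely the deep content of the spread theorems (Guralnick--Kantor, Breuer--Guralnick--Kantor, Burness--Guralnick--Harper), proved with CFSG and extensive case analysis; your plan would amount to re-deriving a result of that depth, and the sketched case division for $\psl(2,q)$ does not begin to do so. So there is a genuine gap, not merely an omitted routine verification.

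The paper's proof sidesteps all of this with a two-step argument you do not find. First, Lemma \ref{accar}(3) is used only to produce $t\in H\cap S$ with $\langle h_2t\rangle\cap S\neq 1$: if $\langle h_2\rangle\cap S\neq1$ take $t=1$; otherwise $|h_2|$ equals the order of $h_2S$ in $\Aut S/S$, and choosing $t$ with $|h_2t|_r\neq|h_2|_r$ forces $\langle h_2t\rangle\cap S$ to contain a non-trivial $r$-element. Second, one picks $1\neq u\in\langle h_2t\rangle\cap S$ and applies \cite[Theorem 1]{bgh} \emph{to $u$, an element of $S$ itself}, to get $s\in S$ with $\langle h_1s,u\rangle=\langle h_1\rangle S$; since $u$ is a power of $h_2t$, this gives $S\le\langle h_1s,h_2t\rangle$ and hence $\langle h_1s,h_2t\rangle=\langle h_1,h_2\rangle S$. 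The key idea your proposal is missing is this passage from the badly-controlled element $b=h_2t$ to a non-trivial power of it lying in $S$, which is exactly what allows the known spread theorem to be quoted as a black box instead of reproved.
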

\begin{proof}
It follows from Lemma \ref{accar} (3) that there exists $t \in H \cap S$ such that $\langle h_2t\rangle \cap S \neq 1.$ Indeed if $\langle h_2\rangle \cap S \neq 1,$ then we can take $t=1.$ Otherwise take $t\in H \cap S$ such that $|h_2|_r \neq |h_2t|_r;$ then $\langle h_2t \rangle \cap S$ contains a non-trivial $r$-element.  Now let $1\neq u \in \langle h_2t \rangle \cap S.$
 By \cite[Theorem 1]{bgh}, there exists $s \in S$ such that $\langle h_1s, u\rangle=\langle h_1, S\rangle.$ This implies 
 $\langle h_1s, h_2t\rangle= \langle h_1, h_2\rangle S.$
\end{proof}
In the next two lemmas let $\tilde H=\{(\alpha_1,\dots,\alpha_n)\sigma \in G \mid \alpha_1,\dots,\alpha_n \in H\},$ being $H$ the subgroup of $\aut S$ introduced in the statement of Lemma \ref{accar}. Clearly, since $H\cap S < S,$ $\tilde H$ is a proper subgroup of $G.$

\begin{lemma}\label{brutto}
	Suppose $\langle g_1, g_2 \rangle A=G$ and that one of the following holds:
	\begin{enumerate}
		\item $g_1^\pi$ has a fixed point;
		\item $g_2^\pi$ has a fixed point;
	\item $(g_1g_2^i)^\pi$ is fixed-point-free for every $i \in \mathbb Z.$
	\end{enumerate}
Then there exist $u_1, u_2$ in $A$ such that $\langle u_1g_1, u_2g_2 \rangle=G$ and $u_1g_1 \in \tilde H$.
\end{lemma}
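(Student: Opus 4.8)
The plan is to separate the two requirements: getting $u_1g_1$ into $\tilde H$ is essentially free, while recovering $\langle u_1g_1,u_2g_2\rangle=G$ is the real work. Write $g_1=(\beta_1,\dots,\beta_n)\sigma_1$ with $\sigma_1=g_1^\pi$. Since $HS=\aut S$ by Lemma \ref{accar}(2) and $S\nn \aut S$, each $\beta_j=s_j'h_j'$ with $s_j'\in S$, $h_j'\in H$, so taking $u_1=((s_1')^{-1},\dots,(s_n')^{-1})\in A$ already gives $u_1g_1\in\tilde H$. This choice is free up to left multiplication of each coordinate by an element of $H\cap S$, so I in fact have a whole coset $(H\cap S)^n$ of admissible $u_1$, all keeping $u_1g_1\in\tilde H$; part of this freedom is spent at the end. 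The problem is thus to choose $u_2\in A$ (and fine-tune $u_1$ inside its coset) so that $X:=\langle u_1g_1,u_2g_2\rangle=G$.

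Because $u_1,u_2\in A$, we automatically have $XA=\langle g_1,g_2\rangle A=G$, so it suffices to force $A\le X$. Here I use that $A=\soc(G)$ is the unique minimal normal subgroup, whence $G$—and so $\pi(G)$—permutes the factors $S_{(1)},\dots,S_{(n)}$ transitively, and since $\langle g_1,g_2\rangle A=G$ we get $\pi(X)=\pi(G)=:P$ transitive. Consequently it is enough to produce a single full factor $S_{(i)}\le X$: conjugating $S_{(i)}$ by elements of $X$, whose images run over the transitive group $P$, then gives $S_{(j)}\le X$ for all $j$, hence $A=\prod_jS_{(j)}\le X$ and $X=G$. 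So the lemma reduces to manufacturing one full factor inside $X$.

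To manufacture it I would feed Lemma \ref{spre} with two elements of $X$ stabilising a common factor $i$ and acting on it by elements of $H$ configured so as to generate $\langle h_1,h_2\rangle S=\aut S$ there. The crux is the asymmetry of Lemma \ref{spre}: from $h_1,h_2\in H$ it returns $h_1s$ (perturbed by $s\in S$) and $h_2t$ (still in $H$, as $t\in H\cap S$). The element that must stay in $H$ comes from $u_1g_1\in\tilde H$, since any power of $u_1g_1$ whose $\pi$-image fixes $i$ acts on factor $i$ by a product of $H$-components, hence by an element of $H$; the element perturbed by $s$ comes from a word involving $u_2g_2$, and the needed $s$ is exactly what I realise by adjusting the coordinate of $u_2$ over factor $i$, while the residual $t\in H\cap S$ is absorbed into the leftover freedom of $u_1$. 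The fixed-point trichotomy then pins down $i$ and the words: in case (1) $\sigma_1$ fixes some $i$, so $u_1g_1$ itself is the clean $H$-seed at factor $i$ and I only choose a $P$-word returning to $i$ and tune $u_2$; in case (2) the roles reverse, $u_2g_2$ supplying the freely $S$-adjustable slot at its fixed factor while a power of $u_1g_1$ around the $\sigma_1$-cycle through $i$ supplies the $H$-slot; and in case (3), where every element of $\sigma_1\langle\sigma_2\rangle$ is fixed-point-free, I seed instead from a nontrivial $\sigma_1$-cycle $C$ through $i$, whose corresponding power of $u_1g_1$ fixes $i$ and acts by a product of $H$-components in $H$.

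The step I expect to be the main obstacle is the last one: controlling the $\aut S$-element that a long word in $u_1g_1,u_2g_2$ induces on its return to factor $i$. Since wreath-product coordinates multiply in a $\sigma$-twisted order along the underlying path, I must verify that the single free socle parameter over factor $i$ suffices to realise exactly the perturbation $s$ demanded by Lemma \ref{spre}, and—more seriously—that $X\cap A$ meets the base in the full factor $S_{(i)}$ rather than in a proper diagonal subgroup tying $i$ to other factors. Excluding such diagonals is the structural heart of the argument, and it is precisely here that the fixed-point hypotheses are indispensable: the excluded configuration, where $\sigma_1,\sigma_2$ are both fixed-point-free yet some $\sigma_1\sigma_2^i$ fixes a point, is exactly the one in which no isolating word exists, which is why it must be deferred to a separate treatment.
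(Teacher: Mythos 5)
Your reduction is sound as far as it goes: since $u_1,u_2\in A$ you get $XA=G$ for $X=\langle u_1g_1,u_2g_2\rangle$, transitivity of $\pi(G)$ means it suffices to show $S_{(i)}\le X$ for a single $i$, and the coset $(H\cap S)^n$ of admissible $u_1$ correctly captures the freedom available while keeping $u_1g_1\in\tilde H$. But the proof stops exactly where the real difficulty begins, and you say so yourself: showing that $X\cap A$ contains a \emph{full} factor $S_{(i)}$ rather than a diagonal subgroup linking $i$ to other coordinates is ``the structural heart of the argument,'' and no argument for it is given. Arranging via Lemma \ref{spre} that two elements of the stabiliser of coordinate $i$ \emph{induce} on $S_{(i)}$ a subgroup containing $S$ only shows that $X$ projects onto $\mathrm{Inn}(S)$ there; it does not exclude that $X\cap A$ is a subdirect product glued along diagonals. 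This is precisely what the paper's ``good $2n$-tuple'' conditions (2) and (3) are for: one must additionally force certain cycle products $a_i^{|\rho_1\cdots\rho_i|/|\rho_i|}$, $b_j^{|\sigma_1\cdots\sigma_j|/|\sigma_j|}$ to be pairwise non-conjugate in $\Aut S$, and the point of Lemma \ref{accar}(3) — which your sketch never invokes — is that these non-conjugacy conditions can be achieved by perturbations lying in $H\cap S$, so that $u_1g_1$ stays in $\tilde H$. Without that ingredient your ``leftover freedom of $u_1$'' is not known to suffice.

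A second concrete problem is your treatment of case (3). You present it as parallel to cases (1) and (2), seeding from a nontrivial $\sigma_1$-cycle. But the paper is explicit that when $\rho\sigma^i$ is fixed-point-free for all $i$ the good-tuple conditions are \emph{not} sufficient to force generation, and additional modifications are needed; the saving grace is that these extra modifications touch only coordinates of $u_2$, so $u_1g_1\in\tilde H$ is preserved. Your sketch gives no mechanism for this extra step and would therefore fail in case (3) as written. In short: your plan is a reasonable outline of the same strategy the paper executes by auditing the proof of \cite[Theorem 1.1]{uni}, but the two decisive technical points — breaking diagonals with $H\cap S$-perturbations via Lemma \ref{accar}(3), and the extra work in the everywhere-fixed-point-free case — are exactly the ones left unproved.
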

\begin{proof}
We will set $\epsilon=2$ if $g_2^\pi$ has a fixed point and $g_1^\pi$ is fixed-point-free, $\epsilon=1$ otherwise. Moreover let
$(\xi_1,\xi_2)=(g_1,g_2)$ if $\epsilon=1,$ $(\xi_1,\xi_2)=(g_2,g_1)$
otherwise. Since $SH=\aut S,$ there exists
$\bar \xi_1, \bar \xi_2 \in \tilde H$ with $\bar \xi_1A=\xi_1$ and $\bar \xi_2A=\xi_2.$ Clearly $\langle \bar \xi_1, \bar \xi_2\rangle A=G.$ The proof of \cite[Theorem 1.1]{uni} indicates how to select $u_1, u_2 \in A$ such that $\langle u_1\bar \xi_1, u_2\bar \xi_2\rangle=G.$ 
We check carefully the proof of that theorem in order to see that $u_1, u_2$ can be chosen with the additional property that $u_\epsilon \in A \cap H^n$.

 Let $r$ be the prime appearing in the statement of Lemma \ref{accar}. A quasi-ordering relation on the set
	of the cyclic permutations which belong to the group $\sym(n)$ is defined. 
	Let $\sigma _1$, $\sigma _2 \in
	\sym(n)$ be two cyclic permutations (including cycles of
	length 1); we define $\sigma _1 \leq \sigma _2$ if either
	$|\sigma _1|_r <
	|\sigma _2|_r$  or
	$|\sigma _1|_r =|\sigma _2|_r$ and $|\sigma _1| \leq |\sigma
	_2|$.
	
	Let $\bar \xi_1=(\alpha_1,\dots,\alpha_n)\rho$,
	$\bar \xi_2=(\beta_1,\dots,\beta_n)\sigma$, with $\alpha_i, \beta_j \in \Aut S$
	and $\rho, \sigma \in \sym(n)$.
	Then write $\rho=\rho_1 \cdots \rho_{s(\rho)}$,
	$\sigma=\sigma_1 \cdots \sigma_q \cdots \sigma_{s(\sigma)}$
	as product of disjoint cycles (including possibly cycles of length 1),
	in such a way that:
	\begin{itemize}
		\item[a)] $\rho_1 \leq \dots \leq \rho_{s(\rho)}$;
		\item[b)] $\supp (\sigma _i) \cap \supp (\rho_1) \neq \emptyset$ if and only if
		$i \leq q$;
		\item[c)] $\sigma_1 \leq \dots \leq \sigma_q$.
	\end{itemize}
	
	Let $\rho_i=(m_{i,1},\dots,m_{i,|\rho_i|})$, $1 \leq i \leq s(\rho)$,
	$\sigma_j=(n_{j,1},\dots,n_{j,|\sigma_j|})$, $1 \leq j \leq q$. We assume
	$m_{1,1}=n_{1,1}=m$. 
If $u_1=(x_1,\dots,x_n)$, $u_2=(y_1,\dots,y_n) \in A=S^n$ define
	$$\bar \alpha_r=x_r\alpha_r,\quad \bar \beta_r=y_r\beta_r,\quad 1 \leq r \leq n,$$
	$$\begin{aligned}a_i =& \ \bar\alpha_{m_{i,1}} \cdots \bar\alpha_{m_{i,|\rho_i|}},
	\ 1 \leq i \leq s(\rho),\\
	b_j =&\ \bar\beta_{n_{j,1}} \cdots \bar\beta_{n_{j,|\sigma_j|}},
	\ 1 \leq j \leq q.\end{aligned}$$
		Moreover let
	$a =\alpha_{m_{1,1}} \bar\alpha_{m_{1,2}} \cdots \bar\alpha_{m_{1,|\rho_1|}}$,
	$b =\beta_{n_{1,1}} \bar\beta_{n_{1,2}} \cdots \bar\beta_{n_{1,|\sigma_1|}}$ and consider
	$K=\langle a, b, S \rangle$. 
Now we say  that a $2n$-tuple $(x_1,\dots,x_n,y_1,\dots,y_n)\in S^{2n}$ is {\em good} if the following three conditions are satisfied:
	
	\begin{itemize}
		\item[(1)] $\langle a_1, b_1 \rangle=
		\langle x_ma, y_mb \rangle=K$.
		\item[(2)] If $2 \leq i \leq s(\rho)$, then $a_i^{|\rho_1 \cdots \rho_i|/|\rho_i|}$ is not conjugate to
		$a_1^{|\rho_1 \cdots \rho_i|/|\rho_1|}$  in $\Aut S.$
		\item[(3)] If $2 \le j \le q$, then $b_j^{|\sigma_1 \cdots \sigma_j|/|\sigma_j|}$ is not conjugate to
		$b_1^{|\sigma_1 \cdots \sigma_j|/|\sigma_1|}$ in $\Aut S$.
	\end{itemize}

We claim that we can find a good $2n$-uple with the additional property that $\{x_1,\dots,x_n,y_1,\dots,y_n\}\setminus \{y_m\}\subseteq H.$ To construct this $2n$-uple we start by choosing arbitrarily $(x_1,\ldots,x_n,y_1,\ldots,y_n)\in (S\cap H)^{2n}$, and then we modify the elements $x_m, y_m,$ the elements
$x_i$ for 	$i \in \{m_{i,1}\mid 2 \leq i \leq s(\rho)\}$ and the elements $y_j$ for
$j \in \{n_{j,1}\mid 2 \leq j \leq q\},$ so that  (1), (2) and (3) are satisfied.
First by  Lemma \ref{spre}, we can find $x \in H\cap S$ and $y \in S$   such that $\langle x a, yb \rangle=K$. We substitute
the original $x_m, y_m$ with $xx_m$ and $yy_m.$
Then let  $i\in \{2,\dots,s(\rho)\}.$
Since $\rho_1 \le \rho_i$, 
 $|\rho_1 \cdots \rho_i|/|\rho_i|$ is coprime with $r$ and therefore,  
	by Lemma \ref{accar}, there exists 
	$s_i \in S\cap H$ such that $
	(s_ia_i)^{|\rho_1 \cdots \rho_i|/|\rho_i|}$ is not conjugate to 
	$a_1^{|\rho_1 \cdots \rho_i|/|\rho_1|}$ in $\aut S$.
	We substitute  $x_{m_{i,1}}$  with $s_i x_{m_{i,1}}.$
Finally let  $j\in \{2,\dots,q\}.$
Since $\sigma_1 \le \sigma_j$, 
$|\sigma_1 \cdots \sigma_j|/|\sigma_j|$ is coprime with $r$, 
again by Lemma \ref{accar}, there exists 
$t_j \in S\cap H$ such that $
(t_jb_j)^{|\sigma_1 \cdots \sigma_j|/|\sigma_j|}$ is not conjugate to 
$b_1^{|\sigma_1 \cdots \sigma_j|/|\sigma_1|}$ in $\aut S$.
We substitute  
$y_{n_{j,1}}$ with $t_jy_{n_{j,1}}.$
With the same argument we can prove that there exists a good $2n$-uple with the additional property that $\{x_1,\dots,x_n,y_1,\dots,y_n\}\setminus \{x_m\}\subseteq H.$	
	
If follows from the proof of \cite [Theorem 1.1]{uni} that if either $\rho$ or $\sigma$ has a fixed point and
$(x_1,\dots,x_n,y_1,\dots,y_n)$ is a good $2n$-tuple then $G=\langle u_1\bar \xi_1,
u_2\bar \xi_2\rangle,$ with $u_1=(x_1,\dots,x_n)$ and $u_2=(y_1,\dots,y_n).$ So it follows from the previous paragraph that $u_1,u_2$ can be chosen so that $u_\epsilon \in (S\cap H)^n.$
	
We remain with the case when $\rho\sigma^i$ is fixed-point-free for every $i \in \mathbb Z.$ In this case
by our definition $\epsilon=1.$ Choose
$u_1=(x_1,\dots,x_n)$, $u_2=(y_1,\dots,y_n) \in S^n$ with the properties that $(x_1,\dots,x_n,y_1,\dots,y_n)$ is a good $2n$-tuple with $\{x_1,\dots,x_n\}\in H\cap S.$ This is the case for which the proof of \cite[Theorem 1.1]{uni} requires more work, since the condition that $(x_1,\dots,x_n,y_1,\dots,y_n)$ is a good $2n$-tuple is no more sufficient to ensure that $G=\langle u_1\bar \xi_1,
u_2\bar \xi_2\rangle.$ Additional conditions are required, but these conditions can be satisfied with further modifications involving only the elements $y_k$ for $k \notin \{m\} \cup\{n_{j,1}\mid 2 \leq j \leq q\}$. In particular it is not needed to modify any more the elements $x_1,\dots,x_n$. This means that there exists $\bar u_2 \in A$ such that
$G=\langle u_1\bar \xi_1,
\bar u_2\bar \xi_2\rangle$	and this is enough for our purpose.
\end{proof}

\begin{cor}\label{tildeg}
	Let $g$ be a vertex of $\Sigma(G).$ If $G/\soc(G)$ is not cyclic, then there exists a vertex $\tilde g \in \Sigma(G)$ such that $\tilde g \in \tilde H$ and the distance between $g$ and $\tilde g$ in $\Sigma(G)$ is at most 2. 
\end{cor}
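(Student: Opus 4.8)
The plan is to reduce everything to Lemma \ref{brutto}, using one elementary feature of $\Sigma(G)$. Recall that, since $G$ has non-abelian socle and is therefore not cyclic, the vertices of $\Sigma(G)$ are exactly the elements of $V(G)$. The hypothesis of the corollary says precisely that $G/A$ is not cyclic, so $\langle x\rangle A\neq G$ for every $x\in G$. From this I extract two facts. First, for every $x\in G$ and every $w\in A$ we have $\langle x,w\rangle A/A=\langle xA\rangle\neq G/A$, hence $\langle x,w\rangle\neq G$; so any $w\in A$ is adjacent to (or equal to) every vertex $x$. Second, for $u\in A$ one has $\langle x,ux\rangle=\langle x,u\rangle\neq G$ by the first fact, so $x$ and $ux$ are adjacent in $\Sigma(G)$ whenever they are distinct. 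This second fact is exactly what lets us pass, for free, from any $g_1\in V(G)$ to the element $u_1g_1\in\tilde H$ produced by Lemma \ref{brutto}: they differ by an element of $A$, hence are adjacent.

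First I would dispose of the case in which $\rho:=g^\pi$ has a fixed point. Choose $g_2$ with $\langle g,g_2\rangle=G$, which exists since $g\in V(G)$. Then $\langle g,g_2\rangle A=G$ and condition (1) of Lemma \ref{brutto} holds, so there are $u_1,u_2\in A$ with $\langle u_1g,u_2g_2\rangle=G$ and $u_1g\in\tilde H$. Setting $\tilde g:=u_1g$ gives $\tilde g\in\tilde H\cap V(G)$, and by the second fact above $g$ and $\tilde g$ are adjacent or equal, so $\dist(g,\tilde g)\leq 1$. This also covers the subcase $g\in A$, where $\rho=1$.

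The remaining case is $\rho$ fixed-point-free, so in particular $g\notin A$; here I would insert a single intermediate vertex lying in the socle. The plan is to pick a non-trivial $w\in A\cap V(G)$. Since $g\notin A$ we have $g\neq w$, and by the first fact $g$ and $w$ are adjacent. Now $w^\pi=1$ fixes every point, so choosing $w'$ with $\langle w,w'\rangle=G$ and applying Lemma \ref{brutto} (condition (1)) to the pair $(w,w')$ produces $\tilde g:=u_1w\in\tilde H\cap V(G)$, adjacent to $w$ by the second fact. Since $\tilde g\in A$ while $g\notin A$, the vertices $g$ and $\tilde g$ are distinct and are joined by the walk $(g,w,\tilde g)$ of length at most $2$, whence $\dist(g,\tilde g)\leq 2$.

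The step I expect to be the real obstacle is the existence of a non-trivial $w\in A\cap V(G)$, that is, a non-trivial socle element belonging to some generating pair of $G$; this is a known consequence of the generation theory of monolithic groups with non-abelian socle, extractable from \cite{uni} (and from \cite[Theorem 1]{bgh} when $n=1$). The intermediate vertex is genuinely necessary, because conditions (1)--(3) of Lemma \ref{brutto} are conditions on the images $g_1^\pi,g_2^\pi$ that can all fail simultaneously: for example $\rho=(1\,2\,3\,4)$ and $\sigma=(1\,2)(3\,4)$ are both fixed-point-free and generate a transitive group, yet $\rho\sigma$ has fixed points, so none of (1)--(3) holds. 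One could instead attempt to replace $g_2$ by $g^jg_2$ to force condition (2) or (3), but this would require a separate permutation-theoretic argument, whereas routing through the socle keeps the proof self-contained once the existence of $w$ is granted.
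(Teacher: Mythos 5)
Your first case (where one of conditions (1)--(3) of Lemma \ref{brutto} holds for the pair $(g,g_2)$) is handled exactly as in the paper: the element $\tilde g=u_1g$ differs from $g$ by an element of $A$, so $\langle g,\tilde g\rangle\leq\langle g\rangle A<G$ and the two are adjacent. The problem is your remaining case. You propose to route through a non-trivial $w\in A\cap V(G)$, and you flag its existence as ``the real obstacle''; in fact it is an impossibility. If $w\in A$ and $\langle w,y\rangle=G$ for some $y$, then $G/A=\langle wA,yA\rangle=\langle yA\rangle$ is cyclic, contradicting the standing hypothesis of the corollary that $G/\soc(G)$ is not cyclic. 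Hence $A\cap V(G)=\emptyset$: no element of the socle is a vertex of $\Sigma(G)$ at all, your ``first fact'' is vacuous, and both the intermediate vertex $w$ and the terminal vertex $\tilde g=u_1w\in A$ of your walk fail to lie in $\Sigma(G)$. (For the same reason the subcase $g\in A$ you mention in the first part cannot occur.) No citation of \cite{uni} or \cite{bgh} can repair this.

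Your diagnosis that an intermediate vertex is genuinely needed is correct, and your example $\rho=(1\,2\,3\,4)$, $\sigma=(1\,2)(3\,4)$ correctly shows that (1)--(3) can all fail; but the right intermediate vertex is the one you dismiss as requiring ``a separate permutation-theoretic argument''. Note that if all three conditions fail, then by the negation of condition (3) there exists $i\in\mathbb Z$ with $(gg_2^i)^\pi$ having a fixed point. Set $g_3=gg_2^i$. Then $\langle g_3,g_2\rangle=\langle g,g_2\rangle=G$, so $g_3$ is a vertex of $\Sigma(G)$. If $\langle g,g_3\rangle=G$, apply Lemma \ref{brutto} to the pair $(g,g_3)$ (condition (2) holds) to get $\tilde g\in\tilde H$ adjacent to $g$. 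Otherwise $g$ and $g_3$ are adjacent, and applying Lemma \ref{brutto} to the pair $(g_3,g_2)$ (condition (1) holds) yields $\tilde g_3=u_1g_3\in\tilde H$ adjacent to $g_3$, so $(g,g_3,\tilde g_3)$ is the desired path of length $2$. This is the paper's argument, and no additional permutation theory is needed beyond the trichotomy already built into Lemma \ref{brutto}.
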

\begin{proof}
Since $g \in \Sigma(G),$ there exists $g_2 \in G$ such that $\langle g, g_2\rangle =G.$ %Let $\rho=g^\pi$ and $\sigma=g_2^\pi.$
If $g$ and $g_2$ satisfy one of the three conditions in the statement of Lemma \ref{brutto}, then there exists $u, u_2$ in $\soc(G)=A$ such that
$\langle ug, u_2g_2 \rangle =G$ and $\tilde g= ug \in \tilde H.$
In particular $\tilde g \in \Sigma(G)$ and $\langle g, \tilde g \rangle \leq \langle g \rangle A < G,$ since we are assuming that $G/A$ is not cyclic, and therefore $g$ and $\tilde g$ are adjacent vertices of $\Sigma(G).$ 
Now assume that $g^\pi$ and $g_2^\pi$ are fixed-point-free but there exists $i\in \mathbb Z$ such that $g_3=gg_2^i$ has a fixed point. If $\langle g, g_3 \rangle=G,$ then we repeat the previous argument using $g_3$ instead of $g_2$ and the find an element $\tilde g$ with the required properties. Suppose $\langle g,g_3 \rangle \neq G.$ In any case $\langle g_2, g_3\rangle= \langle g_2, g\rangle=G$, so $g_3$ is a vertex of $\Sigma(G)$ which is adjacent to $g.$ Moreover, we may apply Lemma \ref{brutto} to the generating pair $(g_3,g_2)$ in order to find $\tilde g_3 \in 
\tilde H$ which is adjacent in $\Sigma(G)$ to $g_3$. So $(g, g_3,  \tilde g_3)$ is a path in $\Sigma(G)$
and  we may take $\tilde g=\tilde g_3.$
\end{proof}

\begin{prop}\label{unso}The graph $\Sigma(G)$ is connected, with diameter at most 5.
\end{prop}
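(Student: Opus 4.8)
The plan is to use Corollary \ref{tildeg} as the central reduction tool: it says that, provided $G/\soc(G)$ is non-cyclic, every vertex $g$ of $\Sigma(G)$ can be pushed, along a path of length at most $2$, to a vertex $\tilde g$ lying inside the distinguished proper subgroup $\tilde H$. So the strategy is to first dispose of the case $G/A$ cyclic, and then, in the main case, show that the image $\tilde H$ of the reduction is itself contained in a single connected region of $\Sigma(G)$ of small diameter. Combining a length-$2$ step at each end with a short internal path would then give the bound $5$.

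First I would handle the case when $G/A$ is cyclic. Here $G=\langle x\rangle A$ for some $x$, and every element of $G$ is of the form $x^i a$ with $a\in A$. The key observation is that a proper subgroup containing $A$ suffices to witness non-adjacency, and since $A=\soc(G)=S^n$ is non-abelian and large, one expects that any two vertices can be connected through elements of $A$ itself (for instance through the identity-like obstructions, or through a fixed non-generating element). In this subcase one should be able to obtain a path of length at most $3$ directly, using that $A$ is a proper normal subgroup and that most pairs in $A\setminus\{1\}$ fail to generate $G$.

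The main case is $G/A$ non-cyclic. I would show $\tilde H$ is \emph{clique-like} enough that any two of its vertices are at distance at most $1$ in $\Sigma(G)$. Concretely, for $\tilde g_1,\tilde g_2\in\tilde H$, we have $\langle \tilde g_1,\tilde g_2\rangle\le \tilde H<G$ because $\tilde H$ is a proper subgroup (this is exactly the remark after the definition of $\tilde H$, using $H\cap S<S$); hence $\tilde g_1$ and $\tilde g_2$ are adjacent in $\Sigma(G)$, provided both genuinely lie in $\Sigma(G)$, i.e.\ are non-universal. This is the crucial point: $\tilde H$ induces a clique on its vertices. Given vertices $g,g'$ of $\Sigma(G)$, Corollary \ref{tildeg} produces $\tilde g,\tilde g'\in\tilde H$ with $\dist(g,\tilde g)\le 2$ and $\dist(g',\tilde g')\le 2$; since $\tilde g,\tilde g'\in\tilde H$ are adjacent, the concatenated path $g,\dots,\tilde g,\tilde g',\dots,g'$ has length at most $2+1+2=5$, giving $\diam(\Sigma(G))\le 5$.

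The hard part is verifying that the endpoints $\tilde g,\tilde g'$ produced by Corollary \ref{tildeg} are actually vertices of $\Sigma(G)$ (non-universal), and that the reduction applies to \emph{every} vertex $g$, not just those whose generating partner meets one of the three cases of Lemma \ref{brutto}. The corollary already addresses this: its statement guarantees $\tilde g\in\Sigma(G)$, and its proof covers the residual case where $g^\pi,g_2^\pi$ are fixed-point-free by passing through an intermediate vertex $g_3=gg_2^i$. One should double-check that the elements of $\tilde H$ we land on are never universal — a universal vertex would have to generate $G$ with every non-identity element, which is impossible for an element of the proper subgroup $\tilde H$ paired against another element of $\tilde H$; so the clique argument is consistent, and universality cannot occur for these vertices. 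The remaining care is purely bookkeeping: confirming the path lengths add correctly and that the $G/A$-cyclic case yields a bound no worse than $5$.
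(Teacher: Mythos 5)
Your treatment of the main case ($G/\soc(G)$ non-cyclic) is exactly the paper's argument: apply Corollary \ref{tildeg} at both ends, note that any two elements of the proper subgroup $\tilde H$ generate a proper subgroup and hence are adjacent in $\Sigma(G)$, and add $2+1+2=5$. (Your worry about ``universal'' vertices is a non-issue: the vertex set of $\Sigma(G)$ is $V(G)$, and the corollary already guarantees $\tilde g\in V(G)$ because $\langle u_1g_1,u_2g_2\rangle=G$.)

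The genuine gap is in the case $G/A$ cyclic, which your sketch does not handle. Your proposed mechanism --- witnessing non-adjacency by a proper subgroup containing $A$, and connecting any two vertices through elements of $A$ --- breaks down precisely in the hardest instance, $G=A=S$ simple, where there is no proper subgroup containing $A$ at all; and even for $G>A$ two elements $x^ia$, $x^jb$ whose images generate the cyclic group $G/A$ cannot both lie in a proper subgroup containing $A$. Moreover, when $G/A$ is cyclic every non-identity element of $G$ is a vertex of $\Sigma(G)$ (this is the spread theorem of Burness--Guralnick--Harper, \cite[Theorem 1]{bgh}), so one must connect \emph{arbitrary} pairs of non-trivial elements by chains of proper subgroups; there is no reason a path of length $3$ exists, and no elementary argument is known. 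The paper resolves this case by observing that $(\Sigma(G),\I(G))$ is then a dual pair of graphs in Cameron's sense, so connectivity and diameter of $\Sigma(G)$ transfer from the intersection graph $\I(G)$: Shen's theorem gives $\diam(\I(G))\leq 4$ when $G$ is neither soluble nor simple, and Freedman's recent theorem gives $\diam(\I(G))\leq 5$ for $G$ simple. These external inputs (especially Freedman's result, which is a substantial CFSG-based theorem) are indispensable and cannot be replaced by the soft argument you outline; indeed the Baby Monster shows $\diam(\Sigma(G))\geq 4$ can occur in this case, so any correct proof must do real work here.
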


\begin{proof}
	We distinguish two cases:

\noindent a) $G/\soc(G)$ is not cyclic.
Suppose that $g_1, g_2$ are two different vertices of $\Sigma(G).$
Choose $\tilde g_1, \tilde g_2$ as in the statement of Lemma \ref{tildeg}. Since $\langle \tilde g_1, \tilde g_2 \rangle \leq \tilde H < G,$ $\tilde g_1, \tilde g_2$ are adjacent vertices of $G$. So the distance in $\Sigma(G)$ between $g_1$ and $g_2$ is at most 5.

\noindent b) $G/\soc(G)$ is cyclic.
Recall that the intersection graph  $\I(G)$ of $G$ is the graph whose vertices are the non-trivial proper subgroups of $G$ and in which two vertices $H$ and $K$ are adjacent if and only if $H\cap K\neq 1.$ 
 By \cite[Theorem 1]{bgh}, when $G/\soc(G)$ is cyclic, the vertex set $V(G)$ of $\Sigma(G)$ coincides with the set of the non-trivial elements of $G$. In particular, as it is explained in \cite[Section 12]{cam},
($\Sigma(G), \I(G)$)  is a dual pair of graphs, and therefore there is a natural
bijection between connected components of $\Sigma(G)$ and connected components of
$\I(G)$ with the property that corresponding components have diameters which are
either equal or differ by 1.
If $G$ is neither soluble nor simple, then 
$\I(G)$ is connected with $\diam(\I(G))\leq 4$ (see \cite[Lemma 5]{she}) hence $\diam(\Sigma(G))\leq 5$.
Freedman recently proved that also when $G$ is a finite non-abelian simple group
the graph $\Sigma(G)$ is connected
with diameter  is at most 5 (see the remark after \cite[Proposition 12]{cam}).
% \cite{she} and \cite{free}). %This implies that the diameter of $\Sigma(G)$ is at most 6. Indeed assume that $x$ and $y$ are two non-trivial elements of $G$ and that $X$ and $Y$ are two maximal subgroups of $G$ with $x\in X$ and $y \in Y.$ There exists a path $X=M_1,\dots,M_r=Y$ in the join graph of $G$ with $r\leq 6.$ For $1\leq j\leq r-1,$ choose $g_i \in M_i \cap M_{i+1}.$ Then $(x,g_1,\dots,g_{r-1},y)$ is a path in $\Sigma(G).$
\end{proof}

We don't know whether the bound in the previous proposition is the best possible.
However Freedman \cite{free} proved that $\diam(\I(G))\leq 5$ for any finite non-abelian simple group $G$ and that the upper bound is attained only by the Baby Monster $\B$ and
some unitary groups. In particular, since $(\I(\B),\Sigma(\B))$ is a dual pair of graphs, it follows $\diam(\Sigma(\B))\geq 4.$

\section{Groups with nilpotent derived subgroup}\label{ultimasec}

\begin{lemma}\label{ciclo}
If $G$ is a non-trivial cyclic group, then $\Sigma(G)$ is disconnected.
\end{lemma}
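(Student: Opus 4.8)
The plan is to analyze the structure of $\Sigma(G)$ directly from the definition when $G$ is a non-trivial cyclic group. Recall that the vertex set of $\Sigma(G)$ consists of the elements $x$ for which there exists some $y$ with $\langle x, y\rangle = G$; since $G$ is cyclic, this is exactly the set of generators of $G$ together with... no, more carefully: if $G = \langle g\rangle$ is cyclic, then \emph{every} element $x$ satisfies $\langle x, g\rangle = G$, so in fact every non-identity element is a vertex, and even the identity is a vertex if $G$ is itself trivially generated by one element — but the identity generates $G$ with a generator $g$ only when $G$ is trivial. So first I would pin down the vertex set $V(G)$: in a cyclic group every element lies in $V(G)$ except possibly the identity, and I will check whether the identity is a universal vertex (hence deleted).

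Next I would describe the edges. Two distinct vertices $x, y$ are adjacent in the non-generating graph precisely when $\langle x, y\rangle \neq G$. The universal vertices of this graph — the ones deleted to form $\Sigma(G)$ — are those $x$ that fail to generate $G$ together with \emph{any} other element, i.e. those $x$ with $x \notin V(G)$. The key observation for the cyclic case is that a generator $g$ of $G$ is adjacent to $x$ if and only if $\langle g, x\rangle = \langle g\rangle = G$ is \emph{false}, which never happens once $g$ is a generator; hence every generator of $G$ is an isolated vertex of $\Sigma(G)$, because it generates $G$ together with every other element. Thus the set of generators forms isolated vertices, while the non-generator non-identity elements form the remaining vertices.

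The crux is then simply to exhibit that $\Sigma(G)$ splits: provided $G$ is non-trivial, it has at least one generator $g$, which is an isolated vertex of $\Sigma(G)$, and it also contains at least one other vertex (for instance another generator, or a proper non-identity element when $|G|$ is not prime). Since an isolated vertex together with at least one further vertex forces disconnectedness, $\Sigma(G)$ is disconnected. I would handle the smallest cases explicitly: if $|G| = p$ is prime, then every non-identity element is a generator, so $\Sigma(G)$ consists of $p-1$ isolated vertices (and is disconnected as soon as $p \geq 3$, while for $p = 2$ one must check the single remaining vertex), and if $|G|$ is composite there exist both generators and proper non-identity elements, again yielding isolated generators alongside other vertices.

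The only genuine subtlety — and the step I would treat most carefully — is the boundary bookkeeping for very small orders: confirming that after deleting the universal vertices the graph still has at least two vertices, and in particular correctly counting vertices when $|G| = 2$ or $|G| = 3$. I would verify directly that in every non-trivial cyclic $G$ at least two vertices survive and at least one of them is isolated, which immediately gives disconnectedness. This aligns with Proposition~\ref{isola}, which records cyclic groups among those for which $\Sigma(G)$ has an isolated vertex.
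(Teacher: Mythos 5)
Your approach is essentially the paper's: the whole proof is the observation that a generator $g$ of $G$ is an isolated vertex of $\Sigma(G)$, since $\langle g,x\rangle=G$ for every $x$, and the paper records exactly this one line. Your extra bookkeeping about the vertex set is fine in spirit, but contains one factual slip you should fix: the claim that ``the identity generates $G$ with a generator $g$ only when $G$ is trivial'' is false, since $\langle 1,g\rangle=\langle g\rangle=G$; hence the identity always lies in $V(G)$ for cyclic $G$, so $\Sigma(G)$ has all $|G|\geq 2$ elements as vertices. This is precisely what settles the boundary case $|G|=2$ you were worried about (two vertices, no edge), so once corrected your argument closes and coincides with the paper's.
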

\begin{proof}
If $\langle g \rangle=G,$ then $g$ is an isolated vertex of $\Sigma(G).$
\end{proof}

\begin{lemma}\label{pgruppo}
If $G$ is a 2-generated finite $p$-group, then $\Sigma(G)$ is disconnected.
\end{lemma}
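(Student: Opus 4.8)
The plan is to reduce everything to linear algebra in the Frattini quotient $\bar G = G/\frat(G)$. If $G$ is cyclic the claim is already Lemma \ref{ciclo}, so I may assume $G$ is non-cyclic; then $G$ needs exactly two generators and, by the Burnside basis theorem, $\bar G\cong C_p\times C_p$ is a $2$-dimensional vector space over $\FF_p$. Throughout I write $\bar x$ for the image of $x\in G$ in $\bar G$.

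First I would pin down the vertex set of $\Sigma(G)$. Since $\frat(G)$ consists of non-generators, one has $\langle g_1,g_2\rangle=G$ if and only if $\langle \bar g_1,\bar g_2\rangle=\bar G$. Consequently an element $x$ is universal in the non-generating graph exactly when $x\in\frat(G)$: if $\bar x=0$ then $\langle x,y\rangle\le \langle y\rangle\frat(G)$, and the latter maps to an at most $1$-dimensional subspace of $\bar G$, hence is proper, for every $y$; while if $\bar x\ne 0$ one may choose $\bar y$ off the line $\langle\bar x\rangle$, so that $\langle x,y\rangle=G$ and $x$ is not universal. Thus the vertex set of $\Sigma(G)$ is exactly $V(G)=G\setminus\frat(G)$.

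Next comes the core observation. For $g_1,g_2\in V(G)$ the images $\bar g_1,\bar g_2$ are nonzero, and $g_1,g_2$ are adjacent in $\Sigma(G)$ if and only if $\langle\bar g_1,\bar g_2\rangle\ne\bar G$, i.e. if and only if $\bar g_1,\bar g_2$ are linearly dependent, equivalently span the same $1$-dimensional subspace (line) of $\bar G$. The key point is that ``spanning the same line'', that is being nonzero scalar multiples of one another, is an equivalence relation on the nonzero vectors of $\bar G$. Therefore along any path $g_1=x_0,x_1,\dots,x_k=g_2$ in $\Sigma(G)$ the lines $\langle\bar x_i\rangle$ are forced to coincide, so $g_1$ and $g_2$ can lie in the same connected component only if $\langle\bar g_1\rangle=\langle\bar g_2\rangle$.

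To conclude, I would note that $\bar G\cong\FF_p^2$ has exactly $p+1\ge 3$ distinct lines, and each line is the image of a nonempty coset of $\frat(G)$, hence meets $V(G)$. Picking vertices lying over two different lines produces two elements of $\Sigma(G)$ that cannot be joined by a path, so $\Sigma(G)$ is disconnected. There is essentially no hard estimate here: once the vertex set is identified as $G\setminus\frat(G)$, the whole argument rests on the single structural fact that adjacency is governed by, and confined within, the lines of the $2$-dimensional Frattini quotient. This is in sharp contrast with the non-soluble situation treated in Section \ref{secunso}; the only point needing a little care is to check that at least two line-classes genuinely carry vertices, which is immediate from $p+1\ge 2$.
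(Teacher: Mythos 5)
Your argument is correct and is essentially the paper's own proof in different clothing: the paper works with the $p+1$ maximal subgroups $M_1,\dots,M_{p+1}$ containing $\frat(G)$ and observes that $\Sigma(G)$ decomposes into the parts $M_i\setminus\frat(G)$, which are exactly the preimages of your lines in $G/\frat(G)\cong C_p\times C_p$. Both proofs rest on the same two facts --- that $V(G)=G\setminus\frat(G)$ and that adjacency is equivalent to lying in a common maximal subgroup, which for a non-cyclic $2$-generated $p$-group is an equivalence relation on the vertices --- so no further comparison is needed.
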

\begin{proof}
By the previous lemma, we may assume that $G$ is not cyclic. Let $F=\frat(G)$. Then $G$ has precisely $p+1$ maximal subgroups,
$M_1,\dots,M_{p+1}$ and $V(G)=G\setminus F.$
Moreover two distinct vertices $x$ and $y$ of $\Sigma(G)$ are adjacent if and only if
$x, y \in M_i$ for some $1\leq i \leq p+1.$
This implies that $\Sigma(G)$ is a complete $(p+1)$-mutipartite graph, with parts $M_1\setminus F,\dots,M_{p+1}\setminus F.$ 
\end{proof}

A crucial role in our proof will be played by the following result, due to Gasch\"utz.
\begin{prop}\cite{Ga}\label{gaz}
	Let $N$ be a normal
	subgroup of a finite group $G$ and suppose that $\langle g_1,\dots g_k\rangle N=G$. If $k \geq d(G),$ then there
	exist $n_1,\dots,n_k \in N$ so that $\langle g_1n_1,\dots g_kn_k\rangle=G$.
\end{prop}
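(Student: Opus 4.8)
The plan is to deduce the statement from a stronger \emph{constancy} phenomenon: for a fixed normal subgroup $N\nn G$, the number of lifts of a generating $k$-tuple of $G/N$ to a generating $k$-tuple of $G$ does not depend on the chosen tuple of $G/N$. Precisely, write $\bar g=(\bar g_1,\dots,\bar g_k)$ for the image of $(g_1,\dots,g_k)$ in $G/N$; by hypothesis $\langle\bar g_1,\dots,\bar g_k\rangle=G/N$. The $k$-tuples $(g_1n_1,\dots,g_kn_k)$ with $n_i\in N$ are exactly the lifts of $\bar g$, and there are $|N|^k$ of them. I would prove that the number
$$P(\bar g)=\#\{(x_1,\dots,x_k)\in G^k : x_iN=\bar g_i\ \forall i,\ \langle x_1,\dots,x_k\rangle=G\}$$
of those lifts that generate $G$ is independent of $\bar g$, as long as $\bar g$ generates $G/N$.

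The key tool is M\"obius inversion over the subgroup lattice of $G$. For any $J\leq G$, the number of lifts of $\bar g$ whose coordinates all lie in $J$ is $\prod_i\#\{x\in J: xN=\bar g_i\}$; this product equals $|J\cap N|^k$ when every $\bar g_i$ lies in $JN/N$ and is $0$ otherwise. Since $\bar g$ generates $G/N$, all the $\bar g_i$ lie in $JN/N$ simultaneously if and only if $JN=G$. Hence, writing $\mu$ for the M\"obius function of the subgroup lattice,
$$P(\bar g)=\sum_{\substack{J\leq G\\ JN=G}}\mu(J,G)\,|J\cap N|^{k},$$
and the right-hand side visibly depends only on $G$, $N$ and $k$, not on $\bar g$. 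This is the crux of the argument, and I expect it to be the main obstacle: everything hinges on the observation that the inner count $|J\cap N|^k$ is insensitive to $\bar g$ precisely because $\bar g$ is a generating tuple. One can instead reduce along a chief series to the case where $N$ is a minimal normal subgroup; in the abelian case a lift fails to generate $G$ exactly when it lies in a complement to $N$, and each complement receives a unique lift of $\bar g$, giving $P(\bar g)=|N|^k-(\text{number of complements})$ directly. The nonabelian minimal normal case is more delicate, which is why the uniform M\"obius identity is preferable.

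Finally I would establish positivity of the constant by a global count. Let $\phi_k(H)$ denote the number of generating $k$-tuples of a group $H$. Every generating $k$-tuple of $G$ projects to a generating $k$-tuple of $G/N$, and by the constancy just proved each generating $k$-tuple of $G/N$ is hit by exactly $P(\bar g)=:P$ generating $k$-tuples of $G$; summing over all generating tuples of $G/N$ gives $\phi_k(G)=P\cdot\phi_k(G/N)$. Now $k\geq d(G)$ forces $\phi_k(G)>0$, while $d(G/N)\leq d(G)\leq k$ forces $\phi_k(G/N)>0$, so $P=\phi_k(G)/\phi_k(G/N)>0$. Applying this to our tuple: its image $\bar g$ generates $G/N$, so $P(\bar g)=P>0$, which means that at least one lift $(g_1n_1,\dots,g_kn_k)$ with $n_i\in N$ generates $G$, exactly as required.
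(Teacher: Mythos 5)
Your argument is correct: the Möbius-inversion computation showing that $P(\bar g)=\sum_{JN=G}\mu(J,G)\,|J\cap N|^k$ is independent of the generating tuple $\bar g$, combined with the identity $\phi_k(G)=P\cdot\phi_k(G/N)$ and the positivity of $\phi_k(G)$ for $k\geq d(G)$, is a complete proof. The paper gives no proof of this statement, citing Gasch\"utz directly, and your counting argument is essentially the classical one from that reference, so there is nothing to reconcile.
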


\begin{prop}\label{quoz}
Let $N$ be a proper normal subgroup of $G.$ If $\Sigma(G/N)$ is connected, then $\Sigma(G)$ is connected and $\diam(\Sigma(G))\leq \diam(\Sigma(G/N)).$
\end{prop}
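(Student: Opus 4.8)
The plan is to lift a shortest path of $\Sigma(G/N)$ to a walk of the same length in $\Sigma(G)$ via the natural projection $\pi\colon G\to G/N$, writing $\bar x=xN$. First I would record how the two vertex sets correspond. Recall that the vertex set of $\Sigma(G)$ is exactly $V(G)$. If $x\in V(G)$, say $\langle x,y\rangle=G$, then $\langle \bar x,\bar y\rangle=G/N$, so $\pi$ maps $V(G)$ into $V(G/N)$. Conversely this map is surjective: given $\bar h\in V(G/N)$, choose $\bar y$ with $\langle \bar h,\bar y\rangle=G/N$ and arbitrary lifts $h,y$; then $\langle h,y\rangle N=G$, and since $G$ is $2$-generated we have $d(G)\le 2$, so Proposition~\ref{gaz} produces $n_1,n_2\in N$ with $\langle hn_1,yn_2\rangle=G$. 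Hence $hn_1\in V(G)$ lies in the coset $\bar h$, and every vertex of $\Sigma(G/N)$ admits a preimage in $V(G)$.

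The second ingredient is an adjacency-lifting observation, for which I would first note that $G/N$ is non-cyclic: since $N$ is proper, $G/N$ is non-trivial, and if it were cyclic then Lemma~\ref{ciclo} would force $\Sigma(G/N)$ to be disconnected, against the hypothesis. Now suppose $x,y\in V(G)$ with $\bar x\neq\bar y$ and $\langle\bar x,\bar y\rangle\neq G/N$. Then $\langle x,y\rangle N/N=\langle\bar x,\bar y\rangle\neq G/N$, so $\langle x,y\rangle N\neq G$ and a fortiori $\langle x,y\rangle\neq G$; as $x,y$ are distinct vertices of $\Sigma(G)$, they are adjacent. Thus \emph{any} two lifts in $V(G)$ of two adjacent vertices of $\Sigma(G/N)$ are themselves adjacent in $\Sigma(G)$. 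The same computation with $\bar x=\bar y$ shows that two distinct $x,y\in V(G)$ in a single $N$-coset satisfy $\langle\bar x,\bar y\rangle=\langle\bar x\rangle\neq G/N$ (here non-cyclicity is used), so they too are adjacent in $\Sigma(G)$.

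With these facts the assembly is short. Let $d=\diam(\Sigma(G/N))$ and take distinct vertices $g_1,g_2\in V(G)$. If $\bar g_1=\bar g_2$, they are adjacent by the last remark, so $\dist_{\Sigma(G)}(g_1,g_2)=1\le d$. Otherwise pick a shortest path $\bar g_1=\bar h_0,\bar h_1,\dots,\bar h_\ell=\bar g_2$ in $\Sigma(G/N)$ with $\ell\le d$; for $1\le i\le\ell-1$ choose, by surjectivity, a lift $h_i\in V(G)$ of $\bar h_i$, and set $h_0=g_1$, $h_\ell=g_2$. Since consecutive $\bar h_{i-1},\bar h_i$ are distinct and non-generating in $G/N$, the adjacency-lifting observation makes $h_{i-1},h_i\in V(G)$ distinct and adjacent in $\Sigma(G)$, so $(g_1=h_0,h_1,\dots,h_\ell=g_2)$ is a walk of length $\ell\le d$ in $\Sigma(G)$. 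Hence $\dist_{\Sigma(G)}(g_1,g_2)\le d$, and $\Sigma(G)$ is connected with $\diam(\Sigma(G))\le\diam(\Sigma(G/N))$.

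I expect the only real subtlety to be that the endpoints of the lifted walk are \emph{forced} to equal the given $g_1,g_2$, rather than being conveniently chosen lifts; this is precisely why the adjacency-lifting statement must be proved for \emph{every} pair of lifts in $V(G)$ (not just for those produced by Gasch\"utz), and why Gasch\"utz is invoked only to place the $\ell-1$ interior vertices inside $V(G)$. Keeping this distinction straight, together with the verification that $G/N$ is non-cyclic so that same-coset endpoints are directly adjacent, is the main point requiring care; the rest reduces to the routine quotient computation $\langle x,y\rangle N/N=\langle\bar x,\bar y\rangle$.
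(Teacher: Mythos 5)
Your proposal is correct and follows essentially the same route as the paper: split into the cases $g_1N=g_2N$ (handled by non-cyclicity of $G/N$, via Lemma~\ref{ciclo}) and $g_1N\neq g_2N$ (lift the interior vertices of a path in $\Sigma(G/N)$ into $V(G)$ using Gasch\"utz, keeping $g_1,g_2$ as endpoints). Your write-up merely makes explicit a few routine verifications that the paper leaves implicit.
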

\begin{proof}
 Let $g_1,g_2$ be two different vertices of $\Sigma(G).$ If $g_1N=g_2N,$ then
$\langle g_1, g_2 \rangle \leq \langle g_1, g_2\rangle N < G,$ since, by Lemma \ref{ciclo}, $G/N$ is not cyclic. In this case $g_1,g_2$ are adjacent vertices of $\Sigma(G).$ So we may assume $g_1N\neq g_2N.$ In this case there exists a path $(g_1N,y_1N,\dots,y_rN,g_2N)$ in $\Sigma(G/N).$ By Proposition \ref{gaz}, for any $1\leq i\leq r$, $y_in_i \in V(G)$ for some $n_i \in N.$ Thus $(g_1,y_1n_1,\dots,y_rn_r,g_2)$ is a path in $\Sigma(G).$
\end{proof}

\begin{cor} If $G$ is non-soluble, then
	$\Sigma(G)$ is connected and $\diam(\Sigma(G))\leq 5.$
\end{cor}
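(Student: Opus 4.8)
The plan is to reduce the statement to Propositions \ref{unso} and \ref{quoz}, which together already contain all the substantive work. Proposition \ref{unso} guarantees that $\Sigma(H)$ is connected with diameter at most $5$ whenever $H$ is a $2$-generated monolithic group with non-abelian socle, while Proposition \ref{quoz} propagates connectivity (together with the diameter bound) from a proper quotient back to $G$. So it suffices to exhibit a proper normal subgroup $N \nn G$ for which $G/N$ is monolithic with non-abelian socle; since quotients of the $2$-generated group $G$ are again $2$-generated, Proposition \ref{unso} will then apply to $G/N$.

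To produce such an $N$, I would choose $N$ maximal among the normal subgroups of $G$ whose quotient $G/N$ is non-soluble. This family is non-empty because $G$ itself is non-soluble, and every member is proper since the trivial group is soluble; hence $N \nn G$ is proper, $G/N$ is non-soluble, and, by maximality, every proper quotient of $G/N$ is soluble.

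Next I would check that $\bar G := G/N$ is monolithic with non-abelian socle. If $\bar G$ had two distinct minimal normal subgroups $M_1 \neq M_2$, then $M_1 \cap M_2 = 1$ would embed $\bar G$ into $\bar G/M_1 \times \bar G/M_2$; both factors are proper quotients of $\bar G$, hence soluble by the choice of $N$, forcing $\bar G$ to be soluble (a subdirect product of soluble groups is soluble), a contradiction. Thus $\bar G$ has a unique minimal normal subgroup $M = \soc(\bar G)$. If $M$ were abelian, then $\bar G/M$ would be a proper, hence soluble, quotient, making $\bar G$ an extension of a soluble group by a soluble group and therefore soluble---again a contradiction. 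Hence $M$ is non-abelian and $\bar G$ is monolithic with non-abelian socle.

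Finally, Proposition \ref{unso} yields that $\Sigma(G/N)$ is connected with $\diam(\Sigma(G/N)) \leq 5$, and Proposition \ref{quoz} then gives that $\Sigma(G)$ is connected with $\diam(\Sigma(G)) \leq \diam(\Sigma(G/N)) \leq 5$. The only point requiring care is the group-theoretic reduction just described---verifying that the maximal non-soluble quotient is monolithic with non-abelian socle---but this is routine, so no genuine obstacle arises here: the heart of the argument, namely the diameter bound in the monolithic non-abelian-socle case, has already been settled in Proposition \ref{unso}.
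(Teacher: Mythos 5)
Your proof is correct and takes essentially the same route as the paper, which simply asserts that a non-soluble group has an epimorphic image that is monolithic with non-abelian socle and then combines Propositions \ref{unso} and \ref{quoz}. You merely spell out the standard construction of that quotient (a maximal normal subgroup with non-soluble quotient), which the paper leaves implicit.
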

\begin{proof}
A non-soluble group $G$ has an epimorphic image which is monolithic with non-abelian socle. So the conclusion follows combining the previous lemma with  Proposition \ref{unso}.
\end{proof}

A chief factor $A=X/Y$ of a finite group $G$ is said to be a non-Frattini chief factor if $X/Y \not\leq \frat(G/Y).$

\begin{cor}Let $G$ be a 2-generated soluble group.
If there exists a non-Frattini and non-central chief factor $A$ of $G$ such that $G/C_G(A)$ is not cyclic of prime order, then $G$ is connected and $\diam(\Sigma(G))\leq 3.$
\end{cor}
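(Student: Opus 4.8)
The plan is to reduce to a primitive quotient and then invoke Proposition \ref{monosol} together with the quotient-descent Proposition \ref{quoz}. Given the hypothesis, there is a non-Frattini, non-central chief factor $A=X/Y$ with $G/C_G(A)$ not cyclic of prime order. The first step is to pass to the quotient $\bar G = G/Y$ and consider the primitive group associated to the chief factor $A$. Since $A$ is non-Frattini, $A$ is complemented modulo $Y$ in some maximal subgroup configuration, so one obtains a primitive monolithic quotient $G/K$ whose socle is (isomorphic to) the chief factor $A$; concretely, $K$ can be taken as the largest normal subgroup of $G$ contained in $C_G(A)$ and supplementing $A/Y$, and $G/K$ is a primitive soluble group with $\soc(G/K)\cong A$. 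The point of the non-Frattini hypothesis is precisely that it guarantees such a primitive quotient exists (rather than $A$ sitting inside the Frattini subgroup, where no complement is available).

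The second step is to identify the structure of this primitive quotient $Q=G/K$. By construction $\soc(Q)\cong A$ is a faithful irreducible module for $Q/\soc(Q)$, and $Q/\soc(Q) \cong G/C_G(A)$ (after the quotient by $K$, the centralizer of the socle is exactly the image of $C_G(A)$, using that $A$ is non-central so $C_G(A)\neq G$). The hypothesis tells us $G/C_G(A)$ is not cyclic of prime order, hence $Q/\soc(Q)$ is not of prime order. The third step is then to apply Proposition \ref{monosol}: since $Q$ is a $2$-generated primitive soluble group that is neither cyclic ($A$ being non-central forces $Q$ non-nilpotent) nor of the form with $Q/\soc(Q)\cong C_p$, the proposition yields that $\Sigma(Q)$ is connected with $\diam(\Sigma(Q))\le 3$. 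Finally, Proposition \ref{quoz} applied to the normal subgroup $K\lhd G$ transfers connectedness and the diameter bound upward: $\Sigma(G)$ is connected and $\diam(\Sigma(G))\le \diam(\Sigma(Q))\le 3$.

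The main obstacle I anticipate is the bookkeeping in the first step: verifying rigorously that the non-Frattini condition produces a primitive quotient whose socle is $A$ and whose quotient-over-socle is exactly $G/C_G(A)$. One must check that $Q$ is genuinely $2$-generated (this follows since $Q$ is an epimorphic image of the $2$-generated group $G$) and that $\soc(Q)$ is faithful and irreducible over $Q/\soc(Q)$, which is where the chief-factor and centralizer hypotheses combine. A subtle point is ensuring that $Q/\soc(Q)$ is not cyclic \emph{of prime order} rather than merely non-cyclic: the statement of Proposition \ref{monosol} excludes $Q/\soc(Q)\cong C_p$, and the hypothesis "not cyclic of prime order" is tailored exactly to this exclusion, so one should confirm the two phrasings match. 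Once the primitive quotient is correctly extracted, the remainder is a direct chaining of the two quoted propositions and requires no further computation.
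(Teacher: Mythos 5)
Your proposal is correct and follows essentially the same route as the paper: the paper's proof simply observes that a non-Frattini chief factor $A$ yields the epimorphic image $A\rtimes G/C_G(A)$, which is a primitive soluble group handled by Proposition \ref{monosol}, and then lifts the conclusion via Proposition \ref{quoz}. Your extra verifications (that the primitive quotient is $2$-generated, non-nilpotent since $A$ is non-central, and has $Q/\soc(Q)\cong G/C_G(A)$ not of prime order) are exactly the points implicitly used in the paper's one-line argument.
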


\begin{proof}
If $A$ is a non-Frattini chief factor of $G$, then $G$ admits as an epimorphic image
the semidirect product $A\rtimes G/C_G(A),$ so  the conclusion follows combining Lemma \ref{quoz} with  Proposition \ref{monosol}.
\end{proof}

\begin{lemma}\label{pochi} Let $G$ be a 2-generated soluble group. Then one of the following occurs:
	\begin{enumerate}
		\item $\Sigma(G)$ is connected and $\diam(\Sigma(G))\leq 3;$
		\item The derived subgroup of $G$ is nilpotent and 
		$G/\frat(G)$ has the following structure:
		$$G/\frat(G)\cong (V_1\times \dots \times V_t)\rtimes H,$$
		where $H$ is abelian and $V_1,\dots,V_t$
		are pairwise non $H$-isomorphic non-trivial irreducible $H$-module (including the possibility $t=0$).
	\end{enumerate}
\end{lemma}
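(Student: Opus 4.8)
The strategy is to assume we are not in case (1) and deduce the structure in case (2), working with the quotient $\ol G=G/\frat(G)$ throughout. First I would reduce to the Frattini quotient: by Proposition~\ref{quoz} the graph $\Sigma(G)$ is connected with $\diam(\Sigma(G))\le\diam(\Sigma(\ol G))$, so it suffices to prove the dichotomy for $\ol G$. Since $\ol G$ has trivial Frattini subgroup and is soluble, its Fitting subgroup $F=\F(\ol G)$ is a direct product of minimal normal subgroups, each an elementary abelian $p$-group which is a faithful irreducible module for $\ol G/C_{\ol G}(\cdot)$. This is the standard fact that a soluble group with trivial Frattini subgroup has completely reducible Fitting subgroup complemented in $\ol G$.

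The heart of the argument is to invoke the preceding corollaries. Each minimal normal subgroup $A$ of $\ol G$ is a non-Frattini, non-central chief factor. If for some such $A$ the quotient $\ol G/C_{\ol G}(A)$ fails to be cyclic of prime order, then the corollary preceding this lemma places us in case (1), with $\diam(\Sigma(G))\le 3$. So in the remaining case \emph{every} minimal normal subgroup $A$ satisfies $\ol G/C_{\ol G}(A)\cong C_{q}$ for some prime $q$. I would then write $F=V_1\times\dots\times V_t$ as the product of the (non-central) minimal normal subgroups, choose a complement $H$ so that $\ol G=F\rtimes H$, and observe that $H\cong\ol G/F$ embeds into the direct product $\prod_i \ol G/C_{\ol G}(V_i)$, a product of groups of prime order; hence $H$ is abelian (indeed a product of cyclic groups of prime order, so elementary-abelian-by-nothing, i.e. a direct product of $C_{q_i}$). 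The pairwise non-isomorphism of the $V_i$ as $H$-modules I would arrange by grouping together any $H$-isomorphic factors: if two minimal normal subgroups were $H$-isomorphic, their diagonal would give a further minimal normal subgroup and one can refine the decomposition so that the listed $V_i$ are pairwise non-isomorphic. Finally I must handle the derived subgroup: since $H$ is abelian, $[\ol G,\ol G]\le F$, and as $F$ is nilpotent, $G'$ maps into the nilpotent group $F$ modulo $\frat(G)$; combined with the fact that $\frat(G)$ is nilpotent, this forces $G'$ to be nilpotent.

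The main obstacle I expect is the bookkeeping around \emph{non-central} chief factors and the transition from ``each $\ol G/C_{\ol G}(A)$ has prime order'' to ``$H$ is abelian with pairwise non-isomorphic $V_i$''. The corollary only applies to non-central factors, so central chief factors of $\ol G$ must be ruled out separately: any central minimal normal subgroup would lie in $\Z(\ol G)$, but in a Frattini-free soluble group a central minimal normal subgroup would have to be complemented and central simultaneously, which makes $\ol G$ a direct factor $C_p\times(\cdots)$; I would need to check such a central factor does not obstruct the conclusion, or is absorbed into the ``$t=0$'' or cyclic cases. The delicate point is therefore verifying that after discarding central factors, the action of $H$ on $F=V_1\times\cdots\times V_t$ is faithful enough that $H\hookrightarrow\prod_i C_{q_i}$ really forces $H$ abelian, and that the refinement to pairwise non-$H$-isomorphic modules does not destroy the property that each individual factor has prime-order image. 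Everything else is a routine assembly of Gasch\"utz's lemma and the two corollaries.
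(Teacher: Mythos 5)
Your overall route is the same as the paper's: pass to $\ol G=G/\frat(G)$, invoke the preceding corollary on non-Frattini non-central chief factors to force every $\ol G/C_{\ol G}(V_i)$ to have prime order, and deduce that a complement to the Fitting subgroup is abelian. However, two of your steps do not work as stated. The first is the pairwise non-isomorphism: the claim that two $H$-isomorphic minimal normal subgroups can be made non-isomorphic by ``refining the decomposition'' is false. If $V_{j_1}\cong_H V_{j_2}=V$, then $V_{j_1}\times V_{j_2}$ is a homogeneous component of multiplicity $2$, and \emph{every} decomposition of it into irreducible summands consists of two $H$-isomorphic copies; no regrouping removes the isomorphism. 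This is precisely where the hypothesis that $G$ is $2$-generated must be used, and your proposal never uses it for this purpose: since $H/C_H(V)$ has prime order and $H$ is abelian, $V$ is $1$-dimensional over $\kndo_H(V)$, a single vector generates only a $1$-dimensional $H$-submodule of $V\times V$, and hence the epimorphic image $(V\times V)\rtimes H$ of $G$ would need more than two generators --- a contradiction. This is the paper's argument and it cannot be replaced by a regrouping.

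The second problem is the abelianness of $H$. With $F=V_1\times\dots\times V_t$ the product of only the \emph{non-central} minimal normal subgroups, the kernel of $\ol G\to\prod_i\ol G/C_{\ol G}(V_i)$ is $C_{\ol G}(F)$, which contains every central minimal normal subgroup and may strictly contain $F$; so $H\cong\ol G/F$ need not embed in the product, and what your argument literally yields is only that $H$ modulo a central subgroup is abelian, i.e.\ that $H$ has nilpotency class at most $2$. You flag this as the ``delicate point'' but do not close it. The paper closes it by working with the \emph{full} Fitting subgroup $W/\frat(G)=V_1\times\dots\times V_u$ of $\ol G$ (central factors included): since $\frat(\ol G)=1$, this subgroup is complemented, say by $K$, and is self-centralizing, so $\bigcap_{1\le j\le u}C_K(V_j)=K\cap C_{\ol G}(W/\frat(G))=1$; thus $K$ embeds in $\prod_j K/C_K(V_j)$, a product of trivial or prime-order groups, and is abelian. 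The central factors $V_{t+1},\dots,V_u$ are then absorbed into $H=(V_{t+1}\times\dots\times V_u)\times K$, which remains abelian. With these two repairs your argument coincides with the paper's proof; the remaining points (reduction to $\ol G$ via Proposition~\ref{quoz}, and deducing that $G'$ is nilpotent from $G'\frat(G)/\frat(G)\le\F(G)/\frat(G)$) are fine.
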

\begin{proof}
%\begin{sloppypar}
Let $\mathcal A$ be the set of the non-trivial irreducible $G$-modules that are $G$-iso\-morphic to a non-Frattini chief factor. By the previous corollary we may assume that $G/C_G(A)$ is cyclic of prime order for every $A\in \mathcal A.$ Let $F$ be the Frattini subgroup of $G$. Then the Fitting subgroup $W/F$ of $G/F$ has a complement in $G/F$ and it is a direct product of minimal normal subgroups of $G/F.$ In particular $G/F \cong (V_1\times \dots \times V_u) \rtimes K,$  where $V_j$ is an irreducible $K$-module for $1\leq j \leq u$ and $\bigcap_{1\leq j \leq u}C_K(V_j)=1.$ By assumption, for $1\leq j\leq u,$ either $C_K(V_j)=K$ or $K/C_K(V_j)\cong C_{p_j}$ for some prime $p_j.$  In particular
$K \leq \prod_{1\leq j\leq u}K/C_K(V_j)$ is abelian.
We may assume that $V_j \leq Z(G)$ if and only if $j>t.$ So we have
$G/F\cong (V_1\times \dots \times V_t) \rtimes H$, with $H=(V_{t+1}\times \dots \times V_u)\times K.$ If $1\leq j_1 < j_2\leq t,$ then $(V_{j_1}\times V_{j_2})\rtimes H,$ being an epimorphic image of $G$, must be 2-generated, and this implies
$V_{j_1}\not\cong_H V_{j_2}.$%\end{sloppypar}
\end{proof}

For the remaining part of this section, we concentrate our attention on the finite groups $G$ with nilpotent derived subgroup.
First we consider the particular case when $G$ itself is nilpotent. The analysis of this case relies on the following lemma.
\begin{lemma}\label{prod}
	If $G=A \times B$ is a non-cyclic group and $(|A|,|B|)=1,$ then
	$\Sigma(G)$ is connected and $\diam(\Sigma(G))\leq 3,$
	with equality if and only one of the following occurs:
	\begin{enumerate}
		\item $A$ is cyclic and either $\Sigma(B)$ is disconnected or
		 $\diam(\Sigma(B))>2.$
		\item $B$ is cyclic and either $\Sigma(A) $ is disconnected or $\diam(\Sigma(A))>2.$
	\end{enumerate}
	
\end{lemma}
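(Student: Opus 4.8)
The plan is to reduce the connectivity of $\Sigma(A\times B)$ to the behaviour of the generating sets of the two coprime factors, exploiting the fact that $V(A\times B)$ decomposes multiplicatively. Since $(|A|,|B|)=1$, a pair $(x_1,y_1),(x_2,y_2)$ generates $A\times B$ if and only if $\langle x_1,x_2\rangle=A$ and $\langle y_1,y_2\rangle=B$; consequently $(x,y)\in V(A\times B)$ \ifa $x\in V(A)$ and $y\in V(B)$. First I would record this observation and use it to describe adjacency in $\Sigma(G)$: two distinct vertices $(x_1,y_1),(x_2,y_2)$ are adjacent precisely when $\langle x_1,x_2\rangle\neq A$ or $\langle y_1,y_2\rangle\neq B$. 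This is the combinatorial engine driving everything.

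The main work is to build short paths. Given two vertices $g=(x_1,y_1)$ and $g'=(x_2,y_2)$, I would try to route through a ``mixed'' intermediate vertex that agrees with $g$ in one coordinate and with $g'$ in the other. The natural candidates are $(x_1,y_2)$ and $(x_2,y_1)$, provided they lie in $V(G)$, i.e.\ provided the relevant coordinates are in $V(A)$ and $V(B)$. When such an intermediate vertex is a legitimate vertex, one checks it is adjacent to each endpoint (it shares a coordinate, so the generated subgroup is too small in that coordinate), giving a path of length $2$. The delicate part is when one of the factors, say $A$, is cyclic: then $V(A)$ consists of generators of $A$, every element in that coordinate generates $A$, and adjacency in $\Sigma(G)$ is forced to come entirely from the $B$-coordinate. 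In that situation $\Sigma(A\times B)$ is essentially a blow-up of $\Sigma(B)$, so its connectivity and diameter are governed by $\Sigma(B)$, which explains the exceptional clause (1); symmetrically for (2).

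I expect the hard part to be the diameter bookkeeping, namely proving $\diam\le 3$ always and pinning down exactly when equality holds. The clean case is when neither $A$ nor $B$ is cyclic: then both $V(A)$ and $V(B)$ contain non-generating configurations, one can always insert a mixed vertex, and I would argue the diameter drops to at most $2$. When exactly one factor is cyclic, say $A$, the graph mirrors $\Sigma(B)$ with each vertex fattened by the coordinate in $A$; here a path in $\Sigma(B)$ of length $r$ lifts to a path in $\Sigma(G)$ of comparable length, and the subtlety is that endpoints differing in the cyclic coordinate but sharing the $B$-coordinate may need an extra hop, producing length exactly $3$ precisely when $\Sigma(B)$ is disconnected or $\diam(\Sigma(B))>2$. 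I would handle this by a careful case analysis on whether the two endpoints share the $B$-coordinate and whether their $B$-coordinates are adjacent, isolated, or at distance $2$ in $\Sigma(B)$, and then verify that the listed conditions (1)--(2) are exactly the obstructions to a path of length $\le 2$. Throughout I would lean on Lemma~\ref{ciclo} to know that a non-cyclic quotient keeps $\Sigma$ from having the trivial structure, and on the coprimality to keep the generation condition factorwise.
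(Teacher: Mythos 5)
Your opening reduction is the same as the paper's: coprimality gives $V(G)=V(A)\times V(B)$ and adjacency of $(x_1,y_1),(x_2,y_2)$ exactly when $\langle x_1,x_2\rangle\neq A$ or $\langle y_1,y_2\rangle\neq B$, and routing through mixed vertices $(x_1,y_2)$, $(x_2,y_1)$ handles the generic case. But your treatment of the crucial cyclic-factor case rests on two false claims. First, for a cyclic group $A$ one has $V(A)=A$, not just the generators of $A$: every element of $A$ lies in a generating pair (pair it with a generator). Second, and consequently, adjacency is \emph{not} ``forced to come entirely from the $B$-coordinate'': the vertices $(1,y_1)$ and $(1,y_2)$ are always adjacent, since $\langle 1,1\rangle\neq A$. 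So $\Sigma(A\times B)$ is not a blow-up of $\Sigma(B)$, and your structural picture would entail that $\Sigma(G)$ is disconnected whenever $\Sigma(B)$ is --- contradicting the very connectivity assertion you are proving (e.g.\ $G=C_q\times P$ with $P$ a non-cyclic $p$-group has $\Sigma(P)$ disconnected but $\Sigma(G)$ connected of diameter $3$). No amount of ``extra hops'' inside a genuine blow-up of a disconnected graph produces a path between different components, so this is not a bookkeeping issue but a gap in the argument.

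The missing idea is precisely to exploit $1\in V$ of the cyclic factor. With the roles normalised as in the paper (take $A$ non-cyclic, which is possible since $G$ is non-cyclic, and let $B$ be the possibly cyclic factor), the only problematic endpoints are $(a_1,b_1),(a_2,b_2)$ with $\langle b_1\rangle=\langle b_2\rangle=B$; then
$\bigl((a_1,b_1),(a_1,1),(a_2,1),(a_2,b_2)\bigr)$
is a path: the outer edges hold because $\langle a_i\rangle\neq A$ ($A$ non-cyclic), and the middle edge because $\langle 1,1\rangle\neq B$. This gives $\diam(\Sigma(G))\leq 3$ unconditionally. The length-$3$ detour can be shortened to $2$ exactly when $a_1,a_2$ admit a common neighbour $a$ in $\Sigma(A)$, via $\bigl((a_1,b_1),(a,b_1),(a_2,b_2)\bigr)$; conversely, when $B=\langle b\rangle$ is cyclic and $a_1,a_2\in V(A)$ have no common neighbour in $\Sigma(A)$, the vertices $(a_1,b)$ and $(a_2,b)$ have no common neighbour in $\Sigma(G)$ because $\langle b',b\rangle=B$ for every $b'$. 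That is the case analysis which actually yields the equality criterion in items (1) and (2); your criterion in terms of components and distances of $\Sigma(B)$ alone does not survive once the blow-up picture is discarded.
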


\begin{proof} Since $(|A|,|B|)=1,$ two elements $(x_1,y_1)$, $(x_2,y_2)$ generate $G$ if and only if $\langle x_1, x_2\rangle=A$
	and $\langle y_1, y_2\rangle=B$. In particular $V(G)=V(A)\times V(B).$ Since $G$ is not cyclic and the orders of $A$ and $B$ are coprime, it is not restrictive to assume that $A$ is also non-cyclic.
	Suppose that $g_1=(a_1,b_1)$, $g_2=(a_2,b_2)$ are two different vertices of $\Sigma(G).$ 
	If $B\neq \langle b_1 \rangle$, then $((a_1,b_1),(a_2,b_1),(a_2,b_2))$ is a path in $\Sigma(G).$ Similarly,
if	$B\neq \langle b_2 \rangle$, then $((a_1,b_1),(a_1,b_2),(a_2,b_2))$ is a path in $\Sigma(G).$
	If $B= \langle b_1 \rangle=\langle b_2 \rangle$, then $((a_1,b_1),(a_1,1),$ $(a_2,1),(a_2,b_2))$ is a path in $\Sigma(G)$. In the last case, if $(a_1,a,a_2)$ is a path is $\Sigma(A),$ then $((a_1,b_1), (a,b_1),
	(a_2,b_2))$ is also a path is $\Sigma(G).$

Finally assume that  $B=\langle b \rangle$ is cyclic and there exist $a_1, a_2 \in V(A)$ without a common neighbour in $\Sigma(A).$  Then $(a_1,b)$ and $(a_2,b)$ do not have a common neighbour in $\Sigma(G)$ and therefore $\diam(\Sigma(G))\geq 3.$
\end{proof}

\begin{cor}\label{nilpo}
	Let $G$ be a 2-generated finite nilpotent group, Then $\Sigma(G)$ is disconnected if and only if $G$ is either a cyclic group or a $p$-group.
	Moreover if $G$ is neither a cyclic group nor a $p$-group, then $\diam(\Sigma(G))=3$ if $G$ has only one non-cyclic Sylow subgroup, $\diam(\Sigma(G))=2$ otherwise.
\end{cor}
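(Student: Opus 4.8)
The plan is to reduce everything to the coprime-factorization Lemma \ref{prod} via the Sylow decomposition of the nilpotent group $G$. Write $G = P_1 \times \cdots \times P_k$ as the direct product of its Sylow subgroups, for distinct primes $p_1,\dots,p_k$. Since $G$ is nilpotent and $2$-generated, $d(G)=\max_i d(P_i)$ (because $\frat(G)=\prod_i\frat(P_i)$ and $G/\frat(G)=\prod_i P_i/\frat(P_i)$), so each $P_i$ is itself $2$-generated, and more generally any direct product of a subcollection of the $P_i$ is $2$-generated. If $G$ is cyclic, $\Sigma(G)$ is disconnected by Lemma \ref{ciclo}; if $G$ is a non-cyclic $p$-group, it is disconnected by Lemma \ref{pgruppo}. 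This yields one direction of the equivalence, so from now on I assume $G$ is neither cyclic nor a $p$-group, i.e. $k\ge 2$.

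First I would record that $G$ has at least one non-cyclic Sylow subgroup: otherwise every $P_i$ is cyclic of coprime order and $G$ would be cyclic. Hence we may relabel so that $P_1$ is non-cyclic, set $A=P_1$ and $B=P_2\times\cdots\times P_k$, and note $(|A|,|B|)=1$ with $G=A\times B$ non-cyclic. Lemma \ref{prod} then immediately gives that $\Sigma(G)$ is connected with $\diam(\Sigma(G))\le 3$, completing the equivalence in the statement.

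For the diameter I would split according to the number of non-cyclic Sylow subgroups. If $P_1$ is the unique non-cyclic Sylow, then $B=P_2\times\cdots\times P_k$ is a nontrivial cyclic group (a coprime product of cyclic groups), and $\Sigma(A)=\Sigma(P_1)$ is disconnected by Lemma \ref{pgruppo}. Thus condition $(2)$ of Lemma \ref{prod} holds, forcing $\diam(\Sigma(G))=3$. If instead at least two Sylow subgroups are non-cyclic, then among $P_2,\dots,P_k$ there is still a non-cyclic factor, so $B$ is non-cyclic as well; now both $A$ and $B$ are non-cyclic, so neither condition $(1)$ nor condition $(2)$ of Lemma \ref{prod} can hold, whence $\diam(\Sigma(G))\le 2$. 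To see this bound is attained, I pick a generating pair $x,y$ with $\langle x,y\rangle=G$; these are distinct vertices of $\Sigma(G)$ since $G$ is non-cyclic, and they are non-adjacent by definition, so $\Sigma(G)$ is not complete and $\diam(\Sigma(G))\ge 2$, giving equality.

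The argument is essentially bookkeeping once Lemma \ref{prod} is in hand, so I do not expect a genuine obstacle; the only points needing a little care are the reduction ``$2$-generated nilpotent $\Rightarrow$ each Sylow $2$-generated'' and the observation that a non-cyclic nilpotent group which is not a $p$-group always has a non-cyclic Sylow subgroup, both of which are standard.
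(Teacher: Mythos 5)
Your proof is correct and follows essentially the same route as the paper: decompose $G$ into its Sylow subgroups, split off a non-cyclic factor, and apply Lemma \ref{prod}, using Lemmas \ref{ciclo} and \ref{pgruppo} for the disconnected cases. You are in fact slightly more careful than the paper on two minor points — checking that each Sylow subgroup is itself $2$-generated (needed to invoke Lemma \ref{pgruppo}) and supplying the lower bound $\diam(\Sigma(G))\geq 2$ via a non-adjacent generating pair — but the substance is identical.
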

\begin{proof}
We decompose $G=P_1\times \cdots \times P_t\times Q_1\times \cdots \times Q_u$
where $P_1,\dots,P_t$ are the cyclic Sylow subgroups of $G$ and
$Q_1,\dots,Q_u$ are the remaining Sylow subgroups. If $u=0$, then 
$\Sigma(G)$ is disconnected by Lemmas \ref{ciclo}. So we may assume $u\geq 1$. If $u=1$ and $t=0,$ then $\Sigma(G)$ is disconnected by Lemma \ref{pgruppo}. If $u=1$ and $t>0,$ then $G=X \times Q_1$
with $X=P_1\times \dots \times P_t$. So by Lemma \ref{prod}, $\Sigma(G)$ is connected,
and, since $X$ is cyclic and $\Sigma(Q_1)$ is disconnected,  
 $\diam(\Sigma(G))=3.$
If $u\geq 2,$ then $G=K \times Q_t$ with
$K=P_1\times \cdots \times P_t\times Q_1\times \cdots \times Q_{u-1}.$ Neither $K$ nor $Q_u$ is cyclic, so, again by Lemma \ref{prod}, $\Sigma(G)$ is connected and $\diam(\Sigma(G))=2.$
\end{proof}

It remains to investigate case when $G$ is as in Lemma \ref{pochi} (2) and $t>0.$ By the following Lemma it is not restrictive to assume $\frat(G)=1.$
\begin{lemma}
	Let $G$ be a 2-generated finite group. Then $\Sigma(G)$ is connected if and only if $\Sigma(G/\frat(G))$ is connected.
\end{lemma}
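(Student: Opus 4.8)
The plan is to show that passing to $\bar G = G/\frat(G)$ neither creates nor destroys connectivity, because generation in $G$ is detected modulo the Frattini subgroup. First I would dispose of the cyclic case: if $G$ is cyclic then so is $\bar G$, and by Lemma \ref{ciclo} both $\Sigma(G)$ and $\Sigma(\bar G)$ are disconnected, so the equivalence holds vacuously. Hence I may assume $G$ is non-cyclic, which forces $\bar G$ to be non-cyclic as well (a group with cyclic Frattini quotient is cyclic). Write $F=\frat(G)$ and let $\pi\colon G\to \bar G$ be the quotient map.

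The engine of the proof is the standard non-generator property of $F$: for every subgroup $H\le G$, $HF=G$ implies $H=G$. Applied to $H=\langle x,y\rangle$ this yields the criterion $\langle x,y\rangle=G \iff \langle \pi(x),\pi(y)\rangle=\bar G$. From this I extract three facts. (i) $x\in V(G)\iff \pi(x)\in V(\bar G)$, so $\pi$ restricts to a surjection $V(G)\to V(\bar G)$ of vertex sets, and membership in $V(G)$ depends only on the $F$-coset. (ii) If $x_1,x_2\in V(G)$ lie in distinct cosets of $F$, then they are adjacent in $\Sigma(G)$ exactly when $\pi(x_1),\pi(x_2)$ are adjacent in $\Sigma(\bar G)$. (iii) Two distinct vertices in the same coset $xF$ are always adjacent in $\Sigma(G)$: the subgroup $\langle x_1,x_2\rangle$ is contained in $\langle x\rangle F$, whose image $\langle \pi(x)\rangle$ is a proper (cyclic) subgroup of the non-cyclic $\bar G$. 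In other words $\Sigma(G)$ is the \emph{blow-up} of $\Sigma(\bar G)$ obtained by replacing each vertex with a clique (its fiber of size $|F|$) and each edge with a complete bipartite graph.

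Given this structure both implications are short. For the direction ``$\Sigma(\bar G)$ connected $\Rightarrow \Sigma(G)$ connected'' I can simply invoke Proposition \ref{quoz} with $N=F$ (a proper normal subgroup, since $G$ is non-trivial). For the converse, suppose $\Sigma(G)$ is connected and pick $\bar x_1,\bar x_2\in V(\bar G)$; lift them to $x_1,x_2\in V(G)$ using (i), take a path $x_1=w_0,w_1,\dots,w_m=x_2$ in $\Sigma(G)$, and apply $\pi$. By (ii)--(iii) each consecutive pair either collapses to a single vertex of $\Sigma(\bar G)$ or maps to an edge of $\Sigma(\bar G)$; deleting the repetitions turns $\pi(w_0),\dots,\pi(w_m)$ into a walk from $\bar x_1$ to $\bar x_2$, proving $\Sigma(\bar G)$ connected.

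I do not expect a genuine obstacle here: the whole content is the Frattini generation criterion, and the only point needing care is the vertex-set bookkeeping---checking that $V(G)$ is a union of $F$-cosets and that the collapsed projection of a path is still a walk rather than accidentally leaving $V(\bar G)$. Both are immediate from fact (i). It is worth noting that Proposition \ref{quoz} already supplies the quantitative refinement $\diam(\Sigma(G))\le \diam(\Sigma(\bar G))$, while the blow-up description gives the reverse inequality as well, so the two diameters in fact coincide whenever the graphs are connected.
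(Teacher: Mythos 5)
Your proof is correct and follows essentially the same route as the paper: both directions rest on the Frattini generation criterion $\langle x,y\rangle=G\iff\langle xF,yF\rangle=G/F$, with Proposition \ref{quoz} handling one implication. The only cosmetic difference is that the paper proves the converse in contrapositive form (lifting a connected component of $\Sigma(G/F)$ to one of $\Sigma(G)$), whereas you project paths downward; your blow-up description and the observation that the diameters coincide are a mild bonus beyond what the paper records.
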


\begin{proof}Let $F=\frat(G).$ Since $G$ is cyclic if and only if $G/F$ is cyclic, by Lemma 	\ref{ciclo} we may assume that $G$ is not cyclic. By Proposition \ref{quoz} we have only to prove that if $\Sigma(G/F)$ is disconnected, then $\Sigma(G)$ is also disconnected. So
	assume that $\Omega$ is a connected component of $\Sigma(G/F)$ and that there exists $yF \in V(G/F) \setminus \Omega.$ Since $\langle g_1, g_2 \rangle=G$ if and only if $\langle g_1F, g_2F \rangle=G/F,$ it follows immediately that $\Omega^*=\{x 	\in G\mid xF \in \Omega\}$ is a connected component of $\Sigma(G)$ and $y \in V(G) \setminus \Omega^*.$
\end{proof}

\begin{lemma}\label{restri}Let $G\cong (V_1\times \dots \times V_t)\rtimes H$,
		where $H$ is abelian and $V_1,\dots,V_t$
	are pairwise non $H$-isomorphic non-trivial irreducible $H$-modules. Then 
one of the following occurs:
\begin{enumerate}
	\item $\Sigma(G)$ is connected and $\diam(G)\leq 3;$
	\item $H \cong C_p$ for a suitable prime $p$.
	\item $H\cong C_p^2$ for a suitable prime $p.$
\end{enumerate}
\end{lemma}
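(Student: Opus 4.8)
The plan is to analyze the structure of $G \cong (V_1 \times \dots \times V_t) \rtimes H$ through the action of the abelian group $H$ on the irreducible modules $V_i$. Since the $V_i$ are pairwise non-$H$-isomorphic, the centralizers $C_H(V_i)$ play a decisive role: setting $K = \bigcap_i C_H(V_i)$, this is the kernel of the action on $V = V_1 \times \dots \times V_t$, and $H/K$ embeds into $\prod_i H/C_H(V_i)$. The key dichotomy should be between the case where $H$ (or the relevant quotient acting on $V$) is large enough to furnish enough ``room'' to build short paths in $\Sigma(G)$, versus the degenerate cases where $H$ is as small as $C_p$ or $C_p^2$, which are exactly the exceptional families in Theorem \ref{main}. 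I would first try to reduce to understanding when two generating complements can be connected.

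First I would establish the generation criterion: since the $V_i$ are pairwise non-isomorphic irreducibles, an element $g = (v, h)$ with $v \in V$, $h \in H$ generates $G$ together with $g' = (v', h')$ precisely when $\langle h, h' \rangle = H$ and the pair projects onto each $V_i \rtimes (H/C_H(V_i))$ as a generating pair; Gasch\"utz (Proposition \ref{gaz}) and the coprime-type arguments of Lemma \ref{prod} should let me lift generation componentwise. I would identify $V(G)$ in terms of which $h \in H$ lie in some $V(H)$-analogue and which cosets of $V$ contain generators. The goal is to show that if $H$ is neither cyclic of prime order nor $C_p^2$, then for any two vertices $g_1, g_2$ of $\Sigma(G)$ I can find a bounded-length non-generating path, giving case (1) with $\diam \le 3$.

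The main obstacle I anticipate is handling the boundary between ``$H$ big enough'' and the two exceptional cases cleanly. When $H$ is cyclic, say $H = \langle h \rangle$, the complements of $V$ are all conjugate, and a generator of $G$ forces one coordinate to generate $H$; if $|H|$ is not prime I expect to mimic the argument in Proposition \ref{monosol}, choosing a proper nontrivial subgroup $\langle h_0 \rangle < H$ and routing paths through elements lying in a fixed complement, yielding diameter at most $3$. The genuinely delicate case is when $H$ is a $p$-group that is not cyclic: here I would show that either $H$ has rank $\ge 3$ or $H \cong C_{p^a} \times C_{p^b}$ with $a+b \ge 3$, in which case there are enough proper subgroups of $H$ containing any given generator-pair-avoiding element to connect everything, and the only surviving obstructions are $H \cong C_p$ and $H \cong C_p^2$. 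Separating $C_p^2$ (which may or may not be exceptional, depending on whether some $C_H(V_i) \cong C_p$ as in Theorem \ref{main}(4)) from larger abelian $p$-groups is where the counting must be done carefully.

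Concretely, for the positive (connected, $\diam \le 3$) conclusion I would argue as follows. Given vertices $g_1 = (v_1, h_1)$ and $g_2 = (v_2, h_2)$, if neither $h_1$ nor $h_2$ generates its relevant quotient of $H$, then $\langle g_i \rangle$ lies in $\langle \text{proper subgroup} \rangle \cdot V$ and I can connect $g_1$ to its ``$H$-part'' $h_1$, connect $h_1$ to $h_2$ through a common proper subgroup of $H$ (available precisely because $H$ is not too small), and connect $h_2$ to $g_2$, giving a path of length $\le 3$. If some $h_i$ does generate the acting quotient, I would use conjugacy of complements together with Lemma \ref{prod}-style factorization over the coprime pieces (splitting off Sylow subgroups of $H$ whose action is trivial) to reduce the length. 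The crucial point making all of this work is that $H$ being abelian with $|H|$ admitting a proper nontrivial subgroup through which both endpoints can be routed is exactly the failure of the two exceptional structures, so the three alternatives of the statement are genuinely exhaustive.
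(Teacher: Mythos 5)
Your strategy --- building short paths directly inside $G$, with the case analysis organized by the isomorphism type of $H$ --- misses the idea that makes the paper's proof work. The paper never constructs paths in $G$ at all: it passes to the epimorphic images $K_i=V_i\rtimes H/C_H(V_i)$, which are primitive soluble groups, and applies Proposition \ref{monosol} together with Proposition \ref{quoz} whenever some $H/C_H(V_i)$ has non-prime order; if all these quotients have prime order but two distinct primes occur, it uses the quotient $K_i\times K_j$ and the argument of Lemma \ref{prod}; in the only remaining case it deduces $H\cong C_p^d$ with $d\le 2$ from $\bigcap_i C_H(V_i)=1$ and $2$-generation. The dichotomy is thus governed by the orders of the acting quotients $H/C_H(V_i)$, not by $|H|$, and organizing the cases by $|H|$ leaves you to prove connectivity by hand in exactly the configurations your sketch does not resolve.

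Two concrete failures. First, your basic path $(g_1,h_1,h_2,g_2)$ is not legitimate in general: $h_i$ is a vertex of $\Sigma(G)$ only if $h_i\in V(G)$, and by the generation criterion of Lemma \ref{corone} an element of $H$ lying in some $C_H(V_j)$ is never in $V(G)$ (the corresponding determinant vanishes for every partner); you never check that the $H$-parts you route through are vertices. Second, your ``delicate cases'' ($H$ cyclic of composite order, or $H\cong C_{p^a}\times C_{p^b}$ with $a+b\ge 3$) are precisely where the connectivity you claim can fail unless one invokes the hypothesis, implicit in the paper's surrounding context ($\frat(G)=1$, whence $\bigcap_i C_H(V_i)=1$), that $H$ acts faithfully on $V_1\times\dots\times V_t$. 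For instance $G=V_1\rtimes C_{p^2}$ with the subgroup of order $p$ acting trivially satisfies all the stated hypotheses, yet $\frat(G)$ has order $p$, $G/\frat(G)\cong V_1\rtimes C_p$, and $\Sigma(G)$ is disconnected; so ``mimicking Proposition \ref{monosol}'' for cyclic $H$ of composite order cannot establish what you assert there. The paper's quotient reduction sidesteps this because faithfulness is needed only at the final step to pin down the exponent of $H$; your plan needs it throughout and never isolates or uses it.
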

\begin{proof}
Let $C_i=C_H(V_i)$ and $H_i=H/C_i.$ The primitive soluble group $K_i=V_i \rtimes H_i$ is an epimorphic image of $G$. If $|H_i|$ is not a prime, then by Propositions \ref{monosol} and \ref{quoz}, $\Sigma(G)$ is connected and $\diam(\Sigma(G)\leq 3.$ So we may assume $|H_i|=p_i,$ with $p_i$ a prime for $1\leq i\leq t.$ Suppose $p_i\neq p_j$ for some $1\leq i<j\leq t.$ Then $K_i \times K_j$ is an epimorphic image of $G$ and it can be easily seen that $V(K_1 \times K_2)=V(K_1)\times V(K_2)$. Arguing as in the proof of Lemma \ref{prod}, it can be deduced that $\Sigma(K_1\times K_2),$ and consequently $\Sigma(G),$ is connected with diameter at most 3. So $p_i=p$ for
$1\leq i \leq t.$ Moreover $\bigcap_{1\leq i\leq t}C_i=1$, and therefore $H$ is an abelian group of exponent $p.$ Since $G$ is 2-generated, we conclude that $H\cong C_p^d$ with $d\leq 2.$
\end{proof}

In  the situation of the previous Lemma,  let $F_i=\kndo_H(V_i).$ Since $H$ is abelian, $\dim_{F_i}V_i=1$.
We may identify $V_i$ with the additive group of the field $F_i.$ Moreover, if $h \in H,$ then there exists $\alpha_i(h) \in F_i^*$ such that
$v^h=\alpha_i(h)v$ for every $v\in V_i.$ The following holds: 
\begin{lemma}\label{corone}
	Assume that $g_1=(v_{1,1},\dots,v_{1,t})h_1$ and $g_2=(v_{2,1},\dots,v_{2,t})h_2$ are elements of $G.$
%	with $w_{i,j}=(x_{i,j,1},\dots,x_{i,j,\delta_j})\in W_j.$ 
For $1\leq j\leq t$ consider the matrix
	$$A_j:=\begin{pmatrix}
	1-\alpha_j(h_1)&1-\alpha_j(h_2)\\
	v_{1,j}&v_{2,j}
	\end{pmatrix}.
	$$
	Then $\langle g_1,g_2\rangle=G$ if and only if the following holds:
	\begin{enumerate}
		\item  $\langle h_1,h_2\rangle=H;$
		\item $\det(A_j)\neq 0$ for every $1\leq j\leq t.$
	\end{enumerate}
\end{lemma}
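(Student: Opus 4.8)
The plan is to translate the condition $\langle g_1,g_2\rangle=G$ into the two stated conditions by exploiting the semidirect-product structure $G\cong (V_1\times\dots\times V_t)\rtimes H$ together with the one-dimensionality of each $V_j$ over $F_j=\kndo_H(V_j)$. Write $U=V_1\times\dots\times V_t$, so $U$ is the base of the semidirect product and $G/U\cong H$. First I would reduce modulo $U$: since $\langle g_1,g_2\rangle=G$ forces $\langle g_1U,g_2U\rangle=G/U\cong H$, condition (1) is clearly necessary. Conversely, once (1) holds we have $\langle g_1,g_2\rangle U=G$, so that $K:=\langle g_1,g_2\rangle$ is a subgroup supplementing $U$, and the remaining task is to decide exactly when $K\cap U=U$, i.e. when $K$ contains all of $U$.

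**The core computation.**
The heart of the argument is to understand the subgroup $K\cap U$. Since each $V_j$ is an irreducible $H$-module and the $V_j$ are pairwise non-isomorphic, $K\cap U$ is $H$-invariant and hence a direct product of a subset of the $V_j$'s; thus $U\le K$ if and only if $V_j\le K$ for every $j$. The plan is to fix an index $j$ and project everything to the quotient $W_j:=V_j\rtimes H$, an epimorphic image of $G$. In $W_j$ the images of $g_1,g_2$ are $\bar g_1=v_{1,j}h_1$ and $\bar g_2=v_{2,j}h_2$, and $V_j\le K$ is equivalent to $V_j\le\langle\bar g_1,\bar g_2\rangle$. Now I would compute commutators and conjugates inside $W_j$ to produce elements of $V_j$. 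The key observation is that for $h\in H$ and $v\in V_j$, the element $(vh)^{\,?}$ can be manipulated so that $h$-free pieces of $V_j$ appear; concretely, conjugating and multiplying generators produces elements of the form $v-\alpha_j(h)v'=\;(1-\alpha_j(h))\cdot(\text{something})$ acting by the scalar $\alpha_j$. Writing $V_j$ additively as $F_j$ and using $v^h=\alpha_j(h)v$, the subgroup of $V_j$ generated inside $\langle\bar g_1,\bar g_2\rangle$ is the $F_j$-subspace (hence the $\langle h_1,h_2\rangle$-submodule) spanned by the two vectors obtained from $\bar g_1$ and $\bar g_2$, namely the columns or rows encoding the pairs $\bigl(1-\alpha_j(h_i),\,v_{i,j}\bigr)$. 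Because $\dim_{F_j}V_j=1$, this submodule is all of $V_j$ precisely when these two vectors are $F_j$-linearly independent, i.e. when $\det(A_j)\neq0$.

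**Carrying out the linear-algebra reduction.**
To make the previous paragraph precise I would argue as follows inside $W_j=V_j\rtimes H$. From $\bar g_i=v_{i,j}h_i$ one extracts the "abelianized" displacement: for any $v\in V_j$ one has $v\,\bar g_i\,v^{-1}\bar g_i^{-1}=v-v^{h_i}=(1-\alpha_j(h_i))v$, while $\bar g_i^{\,}$ itself, read in the coset $h_iV_j$, contributes the translation part $v_{i,j}$. Thus the normal closure of $\langle\bar g_1,\bar g_2\rangle$-contributions inside $V_j$, which equals $V_j\cap\langle\bar g_1,\bar g_2\rangle$ once (1) holds, is exactly the $F_j$-span of the two vectors $\begin{pmatrix}1-\alpha_j(h_1)\\ v_{1,j}\end{pmatrix}$ and $\begin{pmatrix}1-\alpha_j(h_2)\\ v_{2,j}\end{pmatrix}$ projected appropriately; since $V_j$ is $1$-dimensional over $F_j$, fullness of this span is governed by the nonvanishing of the $2\times2$ determinant $\det(A_j)$. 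Assembling over all $j$ and invoking that $U\le K$ iff $V_j\le K$ for every $j$, together with (1), yields the equivalence claimed.

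**Main obstacle.**
The delicate point is justifying that $V_j\cap\langle\bar g_1,\bar g_2\rangle$ is exactly the $F_j$-span described, rather than merely containing it. The inclusion "$\supseteq$" is a direct commutator computation, but "$\subseteq$" — that no larger portion of $V_j$ can sneak in, and in particular that the two obvious generating vectors really do capture the whole intersection — requires care: one must track how products of the generators distribute across the $V_j$-component, using that $H$ is abelian so the scalars $\alpha_j(h_1),\alpha_j(h_2)$ commute and that distinct $V_j$ are $H$-inequivalent so no cross-terms between different $j$'s arise. I would handle this by the standard fact that in $V_j\rtimes H$ with $V_j$ an irreducible $H$-module, the intersection of any subgroup supplementing $V_j$ with $V_j$ is the $\langle h_1,h_2\rangle$-submodule generated by the $V_j$-parts of the "translation cocycle," and then specialize to the one-dimensional case where submodule membership becomes the determinant condition.
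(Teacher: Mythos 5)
Your overall architecture is sound and is essentially the standard argument behind this statement (the paper itself gives no proof, simply citing Propositions 2.1 and 2.2 of \cite{cliq}): necessity of (1) by passing to $G/U$ with $U=V_1\times\dots\times V_t$; the observation that $K\cap U$ is normalized by $KU=G$ and hence, the $V_j$ being pairwise non-isomorphic irreducibles, is a partial product of the $V_j$; the reduction to each $W_j=V_j\rtimes H$; and the fact that $L\cap V_j$ is $0$ or $V_j$ by irreducibility. The genuine gap is in the one place where you try to close the hard direction. The ``standard fact'' you invoke --- that the intersection with $V_j$ of a subgroup supplementing $V_j$ equals the $\langle h_1,h_2\rangle$-submodule generated by the translation parts $v_{1,j},v_{2,j}$ --- is false, and if used literally it would give the wrong criterion: by irreducibility that submodule is all of $V_j$ as soon as some $v_{i,j}\neq 0$, so your criterion would read ``$v_{1,j}=v_{2,j}=0$'' rather than ``$\det(A_j)=0$''. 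A conjugate $H^{w}$ of $H$ with $0\neq w\in V_j$ is a complement of $V_j$ containing elements $vh$ with $v=(1-\alpha_j(h))w\neq 0$, which refutes the fact directly.

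The correct way to finish is the cocycle identity forced by commutativity of $H$. Assuming (1), $L=\langle\bar g_1,\bar g_2\rangle$ fails to contain $V_j$ if and only if $L$ is a complement of $V_j$ in $W_j$, i.e. the graph $\{d(h)h\mid h\in H\}$ of a derivation $d\colon H\to V_j$ with $d(h_i)=v_{i,j}$. Every derivation satisfies $d(h_1h_2)=d(h_1)+\alpha_j(h_1)^{\pm 1}d(h_2)$, and comparing with $d(h_2h_1)$ using $h_1h_2=h_2h_1$ yields precisely the linear relation $\det(A_j)=0$ (up to rescaling the columns of $A_j$ by units, which does not affect vanishing); this proves necessity of (2). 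Conversely, if $\det(A_j)=0$ then, since $\langle h_1,h_2\rangle=H\not\leq C_H(V_j)$, at least one entry $1-\alpha_j(h_i)$ of the first row is a unit, say for $i=1$; the inner derivation $d_w$ with $w=(1-\alpha_j(h_1))^{-1}v_{1,j}$ then satisfies $d_w(h_1)=v_{1,j}$ and, by the vanishing of the determinant, $d_w(h_2)=v_{2,j}$, so $\bar g_1,\bar g_2$ lie in the complement $H^{w}$ and $V_j\not\leq L$. (For the sufficiency of (2) one can argue even faster: the commutator $[\bar g_1,\bar g_2]$ lies in $V_j$ and equals a unit multiple of $\det(A_j)$, so $\det(A_j)\neq 0$ forces $L\cap V_j\neq 0$ and hence $V_j\leq L$.) With this replacement your proof goes through.
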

\begin{proof}
	See Proposition 2.1 and Proposition 2.2 in \cite{cliq}.
\end{proof}

\begin{lemma}
	Let $G$ be as in Lemma \ref{restri}. If $H \cong C_p,$ then $\Sigma(G)$ is disconnected.
\end{lemma}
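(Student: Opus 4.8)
The plan is to extract everything from the generation criterion of Lemma \ref{corone}, using that $H\cong C_p$ trivialises its condition (1). Since $\lvert H\rvert=p$ is prime, $\langle h_1,h_2\rangle=H$ holds precisely when at least one of $h_1,h_2$ is non-trivial. Moreover each $V_j$ is a faithful non-trivial irreducible $C_p$-module, so $C_j=C_H(V_j)=1$ and the field $F_j$ has characteristic different from $p$; hence $\alpha_j(h)$ is a primitive $p$-th root of unity for every $1\neq h\in H$, and in particular
$$1-\alpha_j(h)\neq 0\qquad\text{for all }j\text{ and all }1\neq h\in H.$$
This non-vanishing is the fact I will use throughout. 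I may assume $t\geq 1$, since otherwise $G\cong H\cong C_p$ is cyclic and $\Sigma(G)$ is disconnected by Lemma \ref{ciclo}.

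Next I would identify the vertex set $V(G)$, and in particular describe which elements of the base group $M:=V_1\times\dots\times V_t$ are vertices. For $g_1=(v_{1,1},\dots,v_{1,t})\cdot 1\in M$ paired with some $g_2=(v_{2,1},\dots,v_{2,t})h_2$, generation forces $h_2\neq 1$ (to satisfy (1)), and then
$$A_j=\begin{pmatrix}0 & 1-\alpha_j(h_2)\\ v_{1,j} & v_{2,j}\end{pmatrix},\qquad \det(A_j)=-v_{1,j}\bigl(1-\alpha_j(h_2)\bigr).$$
Since $1-\alpha_j(h_2)\neq 0$, condition (2) of Lemma \ref{corone} can be met (by choosing $v_{2,j}$ freely) exactly when every $v_{1,j}\neq 0$. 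Thus an element of $M$ lies in $V(G)$ if and only if all its coordinates are non-zero. Conversely, a symmetric computation shows that every element with non-trivial $H$-part lies in $V(G)$ (pair it with a suitable element of $M$ having all coordinates non-zero). So $V(G)$ consists of all elements with non-trivial $H$-component together with the all-non-zero elements of $M$; both sets are non-empty because each $V_j$ is non-trivial and $H$ has a generator.

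Finally I would show that $\Omega:=\{(v_1,\dots,v_t)\in M : v_j\neq 0\text{ for all }j\}$ is a connected component, which yields disconnectedness. Any two distinct elements of $\Omega$ have trivial $H$-part, so condition (1) fails and they are adjacent; hence $\Omega$ is a clique. On the other hand, for $g_1\in\Omega$ and any vertex $g_2$ with non-trivial $H$-part $h_2$, condition (1) holds and the determinants computed above satisfy $\det(A_j)=-v_{1,j}(1-\alpha_j(h_2))\neq 0$ for every $j$, so $\langle g_1,g_2\rangle=G$; that is, $g_1$ and $g_2$ are \emph{not} adjacent. Therefore $\Omega$ is a clique with no edges to the remaining (non-empty) vertex set, and $\Sigma(G)$ is disconnected. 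The only point requiring care is the repeated appeal to $1-\alpha_j(h)\neq 0$, i.e.\ the faithfulness of the action and $\mathrm{char}\,F_j\neq p$; everything else is a direct reading of Lemma \ref{corone}.
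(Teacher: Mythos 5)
Your proof is correct and follows essentially the same route as the paper: both identify $\Omega=\{(v_1,\dots,v_t)\in V_1\times\dots\times V_t : v_j\neq 0 \text{ for all } j\}$ as $W\cap V(G)$, observe it is a clique because condition (1) of Lemma \ref{corone} fails for two elements of the base group, and show via the determinant condition that every element of $\Omega$ generates $G$ with every vertex outside $\Omega$, so $\Omega$ is a proper connected component. Your write-up merely makes explicit the computation $\det(A_j)=-v_{1,j}(1-\alpha_j(h_2))$ and the faithfulness of $H$ on each $V_j$, which the paper leaves implicit.
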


\begin{proof}
	Let $W=V_1\times \dots \times V_t.$ By Lemma \ref{corone}, if $w=(v_1,\dots,v_t) \in W$, then $w \in V(G)$ if and only if $v_i \neq 0$ for $1\leq i \leq t.$ Consider $\Omega=W \cap V(G).$ If $w_1, w_2$ are two different elements of $\Omega,$ then they are adjacent in $\Sigma(G).$ Again by Lemma $\ref{corone},$ if $g\in V(G)\setminus \Omega,$ then $G=\langle g, w\rangle$ for any $w \in \Omega.$ This implies that
	$\Omega$ is a proper connected component of $\Sigma(G).$
\end{proof}

\begin{lemma}
	Let $G$ be as in Lemma \ref{restri}. If $H \cong C_p\times C_p$ and $Z(G)\neq 1,$ then $\Sigma(G)$ is disconnected.
\end{lemma}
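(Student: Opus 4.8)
The plan is to translate the hypothesis $\Z(G)\neq 1$ into a statement about the centralizers $C_H(V_j)$ and then exhibit an explicit connected component, imitating the construction used in the case $H\cong C_p$. First I would identify the centre. Writing $W=V_1\times\dots\times V_t$, a direct computation gives $\Z(G)=C_H(W)=\bigcap_{1\le j\le t}C_H(V_j)$: if $h\in C_H(W)$ then $h$ centralizes both $W$ and the abelian group $H$, hence all of $G$; conversely, for $z=vh\in\Z(G)$ with $v\in W$, commuting with $W$ forces $h\in C_H(W)$ (conjugation of $W$ by $vh$ coincides with conjugation by $h$), and then $v=zh^{-1}\in\Z(G)\cap W$, which is trivial because each $V_j$ is a non-trivial $H$-module. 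Now, since $H\cong C_p\times C_p$ is abelian and acts faithfully and irreducibly on each $V_j$ through the cyclic quotient $H/C_H(V_j)\hookrightarrow \kndo_H(V_j)^{*}$, each $C_H(V_j)$ has index $p$, so $|C_H(V_j)|=p$. Two distinct subgroups of order $p$ in $C_p\times C_p$ meet trivially, so $\Z(G)=\bigcap_j C_H(V_j)\neq 1$ forces $C_H(V_1)=\dots=C_H(V_t)=:C$, a subgroup of order $p$ equal to $\Z(G)$.

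The decisive consequence is that every $h\in C$ satisfies $\alpha_j(h)=1$ for all $j$, since $C=C_H(V_j)$. I would then define $\Omega$ to be the set of vertices $g=(v_1,\dots,v_t)h\in V(G)$ whose $H$-component $h$ lies in $C\setminus\{1\}$: this is the exact analogue, in the present setting, of the set $W\cap V(G)$ used when $H\cong C_p$. Because $\alpha_j(h)=1$ collapses the top-left entry of the matrix $A_j$ of Lemma~\ref{corone} to $0$, the criterion there simplifies: for such a $g$ and an arbitrary element $g'=(w_1,\dots,w_t)h'$ one has $\det A_j=-(1-\alpha_j(h'))v_j$, so, provided $v_j\neq 0$ for all $j$, \emph{both} conditions of Lemma~\ref{corone} (namely $\langle h,h'\rangle=H$, which holds iff $h'\notin\langle h\rangle=C$, and $\det A_j\neq0$ for all $j$, which holds iff $h'\notin C_H(V_j)=C$) reduce to the single requirement $h'\notin C$. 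In particular, an element with $H$-component in $C\setminus\{1\}$ is a vertex of $\Sigma(G)$ precisely when all its coordinates $v_j$ are non-zero, and $\Omega\neq\emptyset$ since each $V_j$ is non-trivial.

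It remains to assemble these facts. Any two elements of $\Omega$ have $H$-components in $C\neq H$, so they do not generate $G$ and are therefore adjacent; thus $\Omega$ is a clique, in particular connected. On the other hand, the criterion above shows that a vertex $x=(w_1,\dots,w_t)h'$ fails to generate $G$ with a given $g\in\Omega$ exactly when $h'\in C$; equivalently, every neighbour of $g$ has central $H$-component $h'\in C\setminus\{1\}$, and by the characterization of the previous paragraph such a vertex has all coordinates non-zero, hence lies in $\Omega$. Therefore $\Omega$ is closed under taking neighbours and, being connected, is a full connected component of $\Sigma(G)$. Finally $\Omega$ is proper: a generating pair of $G$ projects onto $H\cong C_p\times C_p$, so one of its two entries has $H$-component outside $C$ and is a vertex lying outside $\Omega$. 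Hence $\Sigma(G)$ is disconnected.

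I would single out as the main point the reduction carried out in the first paragraph, namely pinning down $\Z(G)=\bigcap_j C_H(V_j)$ and deducing that all the centralizers coincide with a single order-$p$ subgroup $C$. Once this is in hand, the rest is the same ``central $H$-part'' component argument as in the $H\cong C_p$ case, with the role of the trivial $H$-part now played by the non-trivial elements of the central subgroup $C$, and only the short determinant computation for $A_j$ needs to be checked.
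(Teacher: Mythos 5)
Your proof is correct and follows essentially the same route as the paper: both identify $\Omega$ as the set of vertices whose $H$-component lies in $Z(G)\setminus\{1\}$ (equivalently $W\,Z(G)\cap V(G)$) and use Lemma \ref{corone} to show it is a proper connected component. The only difference is that you spell out the identification $Z(G)=\bigcap_j C_H(V_j)=C$ and the determinant computation explicitly, details the paper leaves implicit.
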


\begin{proof}
In this case $Z(G)=\langle h\rangle$ is a subgroup of $G$ of order $p.$	Let $W=V_1\times \dots \times V_t.$ By Lemma \ref{corone}, if $x=(v_1,\dots,v_t)h^j \in W\langle h\rangle$, then $x \in V(G)$ if and only if $h^j\neq 1$ and $v_i \neq 0$ for $1\leq i \leq t.$ Consider $\Omega=W\langle h\rangle \cap V(G).$ If $x_1, x_2$ are two different elements of $\Omega,$ then they are adjacent in $\Sigma(G).$ Again by Lemma $\ref{corone},$ if $g\in V(G) \setminus \Omega,$ then $G=\langle g, x\rangle$ for any $x \in \Omega.$ This implies that
	$\Omega$ is a proper connected component of $\Sigma(G).$
\end{proof}

\begin{lemma}
	Let $G$ be as in Lemma \ref{restri}. If $H \cong C_p\times C_p$ and $Z(G)= 1,$ then $\Sigma(G)$ is connected
	and $\diam(\Sigma(G))\leq 2.$
\end{lemma}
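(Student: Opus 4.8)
The plan is to exploit Lemma~\ref{corone} to reduce the connectivity question to a concrete statement about matrices over the fields $F_i$, and then to show that any vertex can be joined to a canonical ``standard'' vertex in at most one step. Write $W=V_1\times\dots\times V_t$ and $H=\langle h_1,h_2\rangle\cong C_p\times C_p$. Since $Z(G)=1$, for every non-trivial $h\in H$ there is at least one index $i$ with $\alpha_i(h)\neq 1$, i.e.\ $h$ acts non-trivially on $V_i$; equivalently $C_H(V_i)$ is a proper (hence order-$p$ or trivial) subgroup of $H$ and $\bigcap_i C_H(V_i)=1$. The first thing I would record is the exact description of $V(G)$ coming from Lemma~\ref{corone}: an element $g=(v_1,\dots,v_t)h$ lies in $V(G)$ precisely when there is a mate $g'=(v'_1,\dots,v'_t)h'$ with $\langle h,h'\rangle=H$ and $\det A_j\neq 0$ for all $j$, where the $j$-th matrix has first row $(1-\alpha_j(h),\,1-\alpha_j(h'))$ and second row $(v_j,\,v'_j)$.

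The key structural step is to choose a convenient target vertex and show everything is within distance $2$ of it. A natural candidate is a vertex $z=(0,\dots,0)h_0$ with $h_0$ a fixed generator-eligible element of $H$, or more flexibly any vertex supported only in the $H$-part. For two arbitrary vertices $g_1=(v_{1,1},\dots,v_{1,t})a$ and $g_2=(v_{2,1},\dots,v_{2,t})b$ of $\Sigma(G)$, I would try to produce a common neighbour, which by definition of $\Sigma(G)$ means an element $w$ with $\langle g_1,w\rangle\neq G$ and $\langle g_2,w\rangle\neq G$, while $w$ itself must be a vertex (lie in $V(G)$). Using Lemma~\ref{corone} the non-generation conditions $\langle g_i,w\rangle\neq G$ translate into: either $\langle (g_i)_H,w_H\rangle\neq H$, or some $\det A_j=0$ for the pair $(g_i,w)$. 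The plan is to pick $w_H$ so that $\langle (g_1)_H,w_H\rangle\neq H$ and $\langle (g_2)_H,w_H\rangle\neq H$ simultaneously, which is possible because in $C_p\times C_p$ one can always find a single nonzero element lying in a proper subgroup ``compatible'' with two prescribed elements; concretely one uses that the proper subgroups are the $p+1$ lines through the origin and chooses $w_H$ along a line meeting the cyclic subgroups generated by the $H$-parts of $g_1$ and $g_2$. One must separately verify $w\in V(G)$, which follows by choosing the $W$-coordinates of $w$ generically so that against some fixed third element all determinants are nonzero.

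The main obstacle I anticipate is the borderline case where the $H$-parts of $g_1$ and $g_2$ already generate $H$ (so the ``easy'' non-generation via the $H$-projection is unavailable for at least one of them); then I must force non-generation through the determinant condition $\det A_j=0$ in some coordinate, and do so for $g_1$ and $g_2$ simultaneously with a single vertex $w$. Here the hypothesis $Z(G)=1$ is essential: it guarantees that for each $j$ at least one of $1-\alpha_j(a)$, $1-\alpha_j(b)$ is nonzero, so the row $(1-\alpha_j(h),\dots)$ for a suitable $w_H$ can be made proportional to the prescribed vector $(v_{i,j})$, making $\det A_j$ vanish on demand; since $\dim_{F_i}V_i=1$ the vanishing of a $2\times 2$ determinant is just a single linear proportionality, which is always solvable by a suitable choice of $w$-coordinate in $V_j$. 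After handling both regimes, I would check that in every case the common neighbour $w$ can be taken to be a genuine vertex, giving $\dist(g_1,g_2)\leq 2$, and since $g_1,g_2$ were arbitrary this yields connectivity together with $\diam(\Sigma(G))\leq 2$.
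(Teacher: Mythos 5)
Your overall strategy coincides with the paper's: assume $\langle g_1,g_2\rangle=G$ (otherwise they are adjacent) and exhibit a common neighbour using the determinant criterion of Lemma \ref{corone}. But the proposal stops exactly where the work begins, and the two heuristics you offer in place of that work are both incorrect. First, the case split is inverted. By Lemma \ref{corone}, if the $H$-parts $a,b$ of $g_1,g_2$ satisfy $\langle a,b\rangle\neq H$ then $g_1$ and $g_2$ are already adjacent, so the \emph{only} situation in which a common neighbour is needed is your ``borderline case'' $\langle a,b\rangle=H$. In that case your recipe for the easy regime --- choose $w_H$ on a line meeting both $\langle a\rangle$ and $\langle b\rangle$ --- is impossible: the two lines intersect trivially, so the only candidate is $w_H=1$, and an element with trivial $H$-part is never a vertex.

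Second, in the genuinely hard regime the claim that ``the vanishing of a $2\times 2$ determinant is always solvable by a suitable choice of $w$-coordinate'' is false as stated. Writing $\Delta(h)=\{j: h\in C_H(V_j)\}$, the equation $\det A_j=0$ for the pair $(g_i,w)$ reads $(1-\alpha_j((g_i)_H))w_j=(1-\alpha_j(w_H))v_{i,j}$; it is solvable for $w_j$ only when $j\notin\Delta((g_i)_H)$, and even then the forced value of $w_j$ may be $0$, which is forbidden whenever $j\in\Delta(w_H)$ (else $w\notin V(G)$). These constraints genuinely clash: for instance if $\Delta(a)\cup\Delta(b)=\{1,\dots,t\}$ (with $\Delta(a)\cap\Delta(b)=\emptyset$, which is what $Z(G)=1$ gives you), then taking $w_H=a$ or $w_H=b$ fails for every choice of coordinates. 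The actual proof consists precisely in finding a consistent choice: the paper conjugates so that $g_1,g_2$ are supported on $\Delta(a)$ and $\Delta(b)$ respectively, and then takes as common neighbour either the element with $H$-part $b$ supported only on $\Delta(b)$ (when $\Delta(a)\cup\Delta(b)\neq\{1,\dots,t\}$) or the element $h_1h_2$ with trivial $W$-part (when $\Delta(a)\cup\Delta(b)=\{1,\dots,t\}$, using that then $\Delta(h_1h_2)=\emptyset$ so $h_1h_2$ is a vertex, and that $\emptyset\neq\Delta(a)\neq\{1,\dots,t\}$ provides the two coordinates where the determinants vanish). None of this verification appears in your proposal, so the central step is missing.
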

\begin{proof}
	For $h \in H,$ let $\Delta(h)=\{i \in \{1,\dots,t\}\mid h \in C_H(V_i)\}.$ %Since $Z(G)=1,$ if $h\neq 1,$ then $\Delta(h)\neq \{1,\dots,r\}.$
	 Let $g=(v_1,\dots,v_t)h \in G.$ By Lemma \ref{corone}, $g\in V(G)$ if and only if $h\neq 1$ and $v_j\neq 0$ for any $j\in \Delta(h).$ Suppose that $g_1=(x_1,\dots,x_t)h_1$, $g_2=(y_1,\dots,y_t)h_2$ are two distinct vertices of $\Sigma(G).$ We may assume $\langle g_1, g_2\rangle=G$, otherwise $g_1,g_2$ are adjacent vertices of $\Sigma(G).$ Up to reordering, we may assume $\Delta(h_1)=\{1,\dots,r\}$ for some $r \in \{0,\dots,t\}.$ Since $H=\langle h_1, h_2\rangle$ and $Z(G)=\bigcap_{1\leq j \leq t}C_H(V_j)=1,$ we must have
	$\Delta(h_2)\subseteq \{r+1,\dots,t\}.$ Up to reordering we may assume $\Delta(h_2) = \{r+1,\dots,r+s\}$ for some $s \in \{0,\dots,t-r\}.$ Moreover, up to conjugation with a suitable element of $V_1\times \dots \times V_t,$ we may assume $x_j=0$ if $j > r$ and $y_k=0$ if $k\leq r.$
	If $s< t,$ then  $g=(0,\dots,0,y_{r+1},\dots,y_{r+s},0\dots,0)h_2\in V(G).$
	On the other hand $\langle g_1, g\rangle$ is
	 contained in $(V_1\times \dots \times V_{t-1})\rtimes H$ and $\langle g_2, g\rangle$ is contained in $(V_1\times \dots \times V_{t})\rtimes \langle h_2 \rangle,$
	 	 so $(g_1,g,g_2)$ is a path in $\Sigma(G).$ Finally assume $r+s=t.$ In this case $\Delta(h_1h_2)=\emptyset$, so $h_1h_2\in V(G)$. Moreover $r>0,$ otherwise $\Delta(h_2)=\{1,\dots,t\}$ and $h_2 \in Z(G),$
	and $r<t$ otherwise $\Delta(h_1)=\{1,\dots,t\}$ and $h_1 \in Z(G).$
	Thus $\langle g_1, h_1h_2\rangle\leq (V_1\times \dots \times V_{t-1})\rtimes H$ and  $\langle g_2, h_1h_2\rangle \leq (V_2\times \dots \times V_{t})\rtimes H$. But then
$(g_1,h_1h_2,g_2)$ is a path in $\Sigma(G).$ 
\end{proof}

\begin{proof}[Proof of Proposition \ref{isola}]First notice that
 generators of a cyclic group and  involutions in $C_2\times C_2$ and $D_p$
 are isolated vertices in the corresponding graphs. Conversely, let $G$ be a 2-generated finite group and suppose that $g\in G$ is an isolated vertex of $\Sigma(G)$. If $g\neq g^{-1}$, then $G=\gen{g,g^{-1}}=\gen{g}$, so $G$ is cyclic. Otherwise $g$ is an involution.
 Suppose that this is the case and assume  that $G$ is not cyclic.
 Since $g \in V(G),$ the set $V_g=\{h \in G\mid \langle h, g\rangle=G\}$ is non-empty. Suppose $h \in V_g.$ Then $G=\langle g,h\rangle = \langle gh^i, h\rangle$ for any $i \in \mathbb Z.$ Hence $gh^i \in V(G)$ and therefore either $h^i=1$ or $\langle g, gh^i\rangle=\langle g, h^i\rangle=G$. In other words, if $g$ generates $G$ together with $h$, then it generates $G$ together with any non-trivial power of $h.$ If $\langle g,h \rangle$ is abelian, this is possible only if $|h|$ is a prime, and we must have $|h|=2$ otherwise $G$ would be cyclic. So if $G$ is abelian, then $G\cong C_2\times C_2.$
If $G$ is non-abelian and $h \in V(G)$, then $g\neq g^h \in V(G),$
thus  $G=\gen{g,g^h}\cong D_n$ is a dihedral group of order $2n$ with $n=|gg^h|.$ In particular $g$ generated $G$ together with any non-trivial power of $gg^h$ and this is possible only if $n$ is a prime.
\end{proof}

\end{document}